\def \Nats {\mathds{N}}
\def \restrictedto {\upharpoonright}
\newcommand{\set}[1]{\{#1\}}
\newcommand{\setarg}[2]{{\{#1\ |\ #2\}}}
\newcommand{\setcolon}[2]{{\{#1:#2\}}}
\newtheorem{lemma}{Lemma}[section]
\newtheorem{corollary}[lemma]{Corollary}
\newtheorem{prop}[lemma]{Proposition}
\theoremstyle{definition}
\newtheorem{definition}[lemma]{Definition}
\newtheorem{observation}[lemma]{Observation}
\newtheorem{example}[lemma]{Example}
\newtheorem*{remark}{Remark}
\theoremstyle{definition}
\newtheorem*{notation}{Notation}
\def \col {\mathfrak{c}}
\def \domcolor {\widetilde{\col}}
\def \descolor {\hat{\col}}
\newcommand{\pairs}[1]{[#1]^2}
\DeclareMathOperator{\CB}{CB}
\newcommand{\CBr}[1]{\CB(#1)}
\DeclareMathOperator{\tree}{T}
\newcommand{\subtree}[2]{\tree^{#1}(#2)}
\newcommand{\treeorder}{<^*}
\newcommand{\treedescendant}{\lhd^*}
\DeclareMathOperator{\Fan}{Fan}
\newcommand{\subfan}[1]{\Fan^{-}(#1)}
\newcommand{\fan}[1]{\Fan(#1)}
\newcommand{\ramseynotation}[4]{#1 \to_{cl} (#2)^{#3}_{#4}}
\newcommand{\closedramseynumber}[2]{R^{cl}(#1,#2)}
\DeclareMathOperator{\cnf}{CNF}
\newcommand{\cnfcut}[2]{\cnf_{#1}(#2)}
\DeclareMathOperator{\component}{\mathit{CC}}
\newcommand{\cc}[2]{\component_{#1}(#2)}
\DeclareMathOperator{\ordertype}{\mathit{ord}}
\newcommand{\otp}[1]{\ordertype(#1)}
\newcommand{\neighbours}[1]{N(#1)}
\newcommand{\neighboursInSet}[2]{N(#1)\cap #2}
\newcommand{\layer}[2]{L^{#1}_{#2}}
\newcommand{\layerTheta}[3]{L(#1)^{#2}_{#3}}
\newcommand{\domsep}[4]{\domcolor(#1,#2;#3,#4)}
\newcommand{\descol}[3]{\descolor(#1,#2,#3)}
\newcommand{\edges}[2]{E^{#1}_{#2}}
\newcommand{\lrg}[3]{\mathfrak{F}(#1)^{#2}_{#3}}
\newcommand{\ordPersFunc}[1]{\rho_{#1}}
\newcommand{\placeInFan}[1]{L(#1)}
\newcommand{\subskel}{\mathrel{\subseteq^{\rm sk}}}
\newcommand{\subskeln}[1]{\mathrel{\subseteq^{\rm sk}_{#1}}}
\newcommand{\oppress}{\mathrel{\perp}}
\newcommand{\harass}{\mathrel{_\omega{\perp}}}
\newcommand{\subcof}{\mathrel{\subseteq_{\rm cof}}}
\begin{document}

\begin{abstract}
We show that the \emph{closed ordinal Ramsey number} $\closedramseynumber{\omega\cdot 2}{3}$ is equal to $\omega^3\cdot 2$.
\end{abstract}

\title{Calculating the closed ordinal Ramsey number $\closedramseynumber{\omega\cdot 2}{3}$}
\author{Omer Mermelstein}
\address{Department of Mathematics\\
Ben-Gurion University of the Negev\\
Beer-Sheva 8410501, Israel}
\email{omermerm@math.bgu.ac.il}
\maketitle

\keywords{Partition calculus, countable ordinals}

\subjclass[2010]{Primary 03E02. Secondary 03E10}

\section{Introduction}

For a set of ordinals $J$, denote the order-type of $J$ by $\otp{J}$. For an ordinal $\alpha$, denote $[J]^\alpha = \setarg{X\subseteq J}{\otp{X} = \alpha}$. For a nonzero cardinal $\kappa$, a natural number $n$, and ordinals $\beta$ and $\alpha_i$ for all $i\in\kappa$, we write
\[
\ramseynotation{\beta}{\alpha_i}{n}{i\in\kappa}
\]
to mean that for every colouring $\col: [\beta]^n\to \kappa$ of subsets of $\beta$ of size $n$ in $\kappa$ many colours, there exist some $i\in\kappa$ and $X\subseteq \beta$ such that $\otp{X} = \alpha_i$, $X$ is closed in its supremum, and $[X]^n\subseteq \col^{-1}(\set{i})$. Should such an ordinal exist, let $R^{cl}(\alpha_i)^n_{i\in\kappa}$ denote the least ordinal $\beta$ for which the statement $\ramseynotation{\beta}{\alpha_i}{n}{i\in\kappa}$ holds. We call $R^{cl}(\alpha_i)^n_{i\in\kappa}$ the \emph{closed ordinal Ramsey number of $(\alpha_i)^n_{i\in\kappa}$}. When $n$ is omitted, by convention n=2.

For a history of partition relations and Rado's arrow notation see \cite{HajLar}. The ordinal partition calculus was introduced by Erd\H{o}s and Rado in \cite{ErdRad}. Topological partition calculus was considered by Baumgartner in \cite{Baum}. Baumgartner's work was continued in recent papers on topological (closed) ordinal partition relations by Caicedo, Hilton, and Pi\~{n}a \cite{Pina},\cite{HilPige},\cite{CaicedoHilton}.

Caicedo and Hilton proved recently that $\omega^2\cdot 3\leq \closedramseynumber{\omega\cdot 2}{3} \leq \omega^3\cdot 100$ \cite[Theorem 8.1]{CaicedoHilton} and provided also the upper bound $\closedramseynumber{\omega^2}{k}\leq \omega^\omega$ for every positive integer $k$ \cite[Theorem 7.1]{CaicedoHilton}. The lower bound $\closedramseynumber{\omega^2}{k} \geq \omega^{k+1}$ is a consequence of \cite[Theorem 3.1]{CaicedoHilton}.

In this paper we calculate the exact value $\closedramseynumber{\omega\cdot 2}{3}=\omega^3\cdot 2$. In a subsequent paper we will show that $\closedramseynumber{\omega^2}{3} = \omega^6$.

\subsection{Description of proof}
In order to prove $\closedramseynumber{\omega\cdot 2}{3}\leq \omega^3\cdot 2$, we must show that for each colouring $\col:\pairs{\omega^3\cdot 2}\to 2$ with no homogeneous triples of colour 1, there is some homogeneous $Y\in[\omega^3\cdot 2]^{\omega\cdot 2}$ of colour 0, closed in its supremum. However, the first part of the proof is the canonization of any finite pair-colouring of an ordinal less than $\omega^\omega$.

We show first that for each $\delta<\omega^\omega, k\in\Nats$ and pair-colouring $\col: \pairs{\delta}\to k$, there exists some $X\subseteq \delta$, closed in its supremum with $\otp{X} = \delta$, such that $\col\restrictedto \pairs{X}$ is a \emph{canonical} colouring (Definition \ref{canonicalDef}). A canonical colouring is one such that for ``most'' $\set{\alpha,\beta}\in\pairs{\delta}$, the value $\col(\set{\alpha,\beta})$ depends only on the Cantor-Bendixson ranks of the points $\alpha,\beta$ in the topological space $\delta$, and on the Cantor normal form of $\delta$. Canonical colourings will also be used in a future paper in the proof of $\closedramseynumber{\omega^2}{3} = \omega^6$.

The bound $\closedramseynumber{\omega\cdot 2}{3}\leq \omega^3\cdot 2$ is achieved by a combinatorial analysis of canonical colourings in two colours, and the following simple lemma on finite sets:

\begin{lemma}
\label{finiteSetLemma}
Whenever $F\subseteq Fin(\Nats)$ is an infinite $\subseteq$-antichain, there exists an infinite set $\setcolon{A_i}{i<\omega}\subseteq F$ and points $\setcolon{k_i}{i<\omega}\subseteq\Nats$ such that $k_i\in A_i\setminus\bigcup_{j\neq i} A_j$.
\end{lemma}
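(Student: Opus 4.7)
The plan is to reduce the problem to the classical sunflower ($\Delta$-system) lemma, applied to $F$. Recall that this lemma says any infinite family of finite sets contains an infinite subfamily $\set{A_i : i<\omega}$ forming a \emph{sunflower}: there is a fixed set $R$ (the \emph{root}) with $A_i \cap A_j = R$ for all $i \neq j$, equivalently the \emph{petals} $A_i \setminus R$ are pairwise disjoint. The first step is to invoke this to obtain such a subfamily of $F$.

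Once the sunflower is in hand, the antichain hypothesis immediately yields that every petal is nonempty. If some $A_i$ equaled $R$, then for any other $A_j$ the sunflower identity $A_i \cap A_j = R = A_i$ would force $A_i \subseteq A_j$, hence $A_i \subsetneq A_j$ since the members of the sunflower are distinct, contradicting $F$ being a $\subseteq$-antichain. Picking any $k_i \in A_i \setminus R$ therefore makes sense, and the disjointness of petals together with $k_i \notin R$ gives $k_i \notin A_j$ for every $j \neq i$, as required.

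For completeness I would briefly sketch the sunflower extraction itself. By pigeonhole, pass to an infinite subfamily of $F$ all of whose sets have a common cardinality $n$, and induct on $n$. The case $n=1$ is immediate: infinitely many distinct singletons form a sunflower with empty root. For the inductive step, either some $x \in \Nats$ is contained in infinitely many sets of the subfamily (restrict to those, delete $x$, apply the induction hypothesis, then re-attach $x$ to the root), or else every $x \in \Nats$ lies in only finitely many sets, in which case a greedy construction picks pairwise disjoint sets one after another and produces a sunflower with empty root. There is no genuine obstacle; the combinatorial content is entirely in the sunflower extraction, while the antichain hypothesis is used in the single clean step of guaranteeing nonempty petals.
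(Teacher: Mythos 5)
Your reduction to the sunflower lemma breaks down at the very first step. To extract a sunflower you pass, ``by pigeonhole,'' to an infinite subfamily of $F$ whose members all have a common cardinality $n$; but an infinite $\subseteq$-antichain of finite sets need not contain any such subfamily, since the cardinalities $|A|$ for $A\in F$ can be unbounded, in which case every cardinality occurs only finitely often. This is not a repairable technicality in the extraction: the sunflower conclusion itself is false for infinite antichains. For instance, take $A_n = \setcolon{2k}{k<n}\cup\set{2n+1}$ for $n\geq 1$. The family $\setcolon{A_n}{n\geq 1}$ is a $\subseteq$-antichain of finite sets (each $A_n$ contains the odd number $2n+1$, which lies in no other $A_m$, and $|A_n|$ is strictly increasing), yet it contains no sunflower of size $3$: for $n_0<n_1<n_2$ one has $A_{n_0}\cap A_{n_2}=\setcolon{2k}{k<n_0}$ while $A_{n_1}\cap A_{n_2}=\setcolon{2k}{k<n_1}\supsetneq\setcolon{2k}{k<n_0}$, so the pairwise intersections are not constant. (The lemma's conclusion does hold here --- take $k_n = 2n+1$ --- just not via a sunflower.)

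The paper extracts a strictly weaker structure, available with no boundedness hypothesis: an infinite re-indexed subfamily in which $A_i\cap A_j = X_i$ whenever $i<j$, where the roots $X_i$ form an increasing chain rather than a single constant set. This follows from a simple pigeonhole recursion --- at stage $n$ one fixes $i_n$ and shrinks the index set so that $A_{i_n}\cap A_j$ becomes constant in $j$, using only that $A_{i_n}$ is finite. The antichain condition then enters exactly where you use it: it forces $Y_i := A_i\setminus X_i\neq\emptyset$ (otherwise $A_i = X_i\subseteq A_j$ for $j>i$), and one checks $Y_i\cap A_j\subseteq X_{\min\set{i,j}}\subseteq X_i$, so $Y_i\cap A_j=\emptyset$ for all $j\neq i$; now pick $k_i\in Y_i$. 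Your overall strategy --- find a canonical intersection pattern, then invoke the antichain hypothesis once to get nonempty private parts --- is the right one, but the canonical pattern must be this one-sided $\Delta$-system, not a genuine sunflower.
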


\begin{proof}
First, for every countably infinite collection $\setcolon{A_i}{i<\omega}$ of distinct finite subsets of $\Nats$, there is some infinite $I\subseteq\omega$ and sets $\setcolon{X_i}{i\in I}$ such that $A_i\cap A_j = X_i$ for all $i,j\in I$ such that $i<j$: we define sets $B_n$ and natural numbers $i_n$ such that
\begin{enumerate}
\item 
$B_0 = \omega$
\item
$i_n = \min B_n$
\item
$B_{n+1}\subseteq \setarg{i\in B_n}{i>i_n}$ is infinite and $A_{i_n}\cap A_j$ is constant for all $j\in B_{n+1}$.
\end{enumerate}
Point 3 is gotten by applying the pigeonhole principle, as $A_{i_n}$ is finite and $B_n$ is infinite. Thus $X_{i_n} = \bigcap_{k\geq n}A_{i_k}$, and hence $m<n\implies X_m\subseteq X_n$.

Given an infinite $\subseteq$-antichain $F\subseteq Fin(\Nats)$, we may assume by shrinking it, that it is as above for some enumeration $F=\set{A_i}_{i\in\omega}$. Let $Y_i = A_i\setminus X_i$. Since $F$ is an antichain, $Y_i\neq \emptyset$. Moreover, for distinct $i,j$ it holds that $Y_i\cap A_j \subseteq X_{\min\set{i,j}}\subseteq X_i$. This implies that $Y_i$ is disjoint from $A_j$ for every $i\neq j$. Now choose $k_i\in Y_i$ arbitrarily.
\end{proof}

The bound $\closedramseynumber{\omega\cdot 2}{3}\geq \omega^3\cdot 2$ is achieved by describing a single colouring $\col:\pairs{\omega^3\cdot 2}\to 2$ such that for each $\theta<\omega^3\cdot 2$, the colouring $\col\restrictedto\pairs{\theta}$ demonstrates $\closedramseynumber{\omega\cdot 2}{3}>\theta$.
\section{Preliminaries}

For every nonzero ordinal $\alpha$ there exist a unique nonzero $k\in\Nats$ and a unique sequence of ordinals $\beta_1 \geq ... \geq \beta_k$ such that
\[
\alpha = \omega^{\beta_1} + \omega^{\beta_2} + \dots + \omega^{\beta_k}.
\]
Also, there exist a unique $l\in\Nats$, a sequence of ordinals $\gamma_1 >\dots > \gamma_l$, and a sequence of nonzero natural numbers $m_1,\dots,m_l$ such that
\[
\alpha = \omega^{\gamma_1}\cdot m_1 + \omega^{\gamma_2}\cdot m_2 + \dots + \omega^{\gamma_l}\cdot m_l.
\]
These representations of $\alpha$ are two variations of the Cantor normal form of an ordinal $\alpha$. Here, we shall refer to the first as the \emph{Cantor normal form}. The \emph{Cantor-Bendixson rank\footnote{This is the Cantor-Bendixson rank of $\alpha$ as a point in the space of ordinals}} (CB rank) of $\alpha$ is $\beta_k$ (which is equal to $\gamma_l$), and is denoted by $\CBr{\alpha}$. We also denote $\placeInFan{\alpha} = m_l$. For the ordinal $\alpha=0$, see the following remark.

\begin{remark}
We wish to consider $\CBr{\alpha}$ and $\placeInFan{\alpha}$ for every ordinal $\alpha$, but these values do not naturally make sense for $0$. We solve this for $\alpha=0$ by either omitting $0$ completely, or arbitrarily setting $\CBr{0} = 0$ and $\placeInFan{n} = n+1$ for each $n\in\omega$, virtually omitting $0$.
\end{remark}

Beside the ordinary $\leq$ order relation on ordinals, we endow the ordinals with an anti-tree ordering $\treeorder\subseteq\leq$. By anti-tree, we mean that $\setarg{\beta}{\alpha\treeorder\beta}$ is well-ordered by $\treeorder$ for all $\alpha$. This is the same ordering $\treeorder$ presented in section 8 of \cite{CaicedoHilton}.

We say that $\beta\treeorder\alpha$ whenever $\alpha = \beta + \omega^\gamma$ for some nonzero ordinal $\gamma$ with $\gamma > \CBr{\beta}$. Equivalently, for some $\gamma > \CBr{\beta}$, $\alpha$ is the least ordinal of $CB$ rank $\gamma$ with $\beta\leq \alpha$. We write $\beta\treedescendant\alpha$ if $\alpha$ is the unique immediate successor of $\beta$ in $\treeorder$. A graphical representation of $\treeorder$ on $\omega^3\cdot 2$ can be found in Figure \ref{figure} at the end of the paper.

\begin{notation}
Let $\alpha$ be some ordinal.
\begin{itemize}
\item
Denote $\subtree{}{\alpha} = \set{\alpha}\cup\setarg{\beta}{\beta\treeorder\alpha}$ and $\subtree{=n}{\alpha}=\setarg{\beta\treeorder\alpha}{\CBr{\beta}=n}$.
\item
Assuming $\CBr{\alpha} = \gamma+1$ is a successor ordinal, denote
\[
\subfan{\alpha} = \setarg{\beta}{\beta\treedescendant\alpha}.
\]
Equivalently, $\subfan{\alpha} = \setarg{\alpha + \omega^{\gamma}\cdot (i+1)}{i\in\omega}$. Letting $\widehat{\alpha}$ be the unique ordinal such that $\alpha\treedescendant\widehat{\alpha}$, denote
\[
\fan{\alpha} = \subfan{\widehat{\alpha}},
\]
the unique set of the form $\subfan{\beta}$ of which $\alpha$ is a member.
\end{itemize}
Let $I$ be a set of ordinals.
\begin{itemize}
\item
Denote by $\ordPersFunc{I}$ the unique order preserving bijection from $I$ onto $\otp{I}$.
\item
Write $J\subcof I$ to mean that $J$ is a cofinal subset of $I$.
\end{itemize}
\end{notation}

From here onwards, \textbf{all ordinals are assumed to be smaller than $\omega^\omega$}.

\subsection{Skeletons}

The following is the main notion we use in thinning out arguments.

\begin{definition}
\label{skeletonDefinition}
	Let $\delta<\omega^\omega$ be an ordinal. We say that $I\subseteq \delta$ is a \emph{skeleton} of $\delta$ if:
	\begin{enumerate}[(S1)]
	\item
	$I$ is closed in $\delta$ in the order topology;
	\item
	$\otp{I} = \delta$;
	\item
	$\alpha \treeorder \beta$ if and and only if $\ordPersFunc{I}(\alpha) \treeorder \ordPersFunc{I}(\beta)$, for all $\alpha,\beta\in I$.
	\end{enumerate}
	For an arbitrary set of ordinals $J$, we say that $I\subseteq J$ is a skeleton of $J$ if $\ordPersFunc{J}[I]$ is a skeleton of $\otp{J}$. Write $I\subskel J$.
	\end{definition}
	
\begin{definition}
For each $n\in\Nats$, write $I\subskeln{n} J$ and say that $I$ is an \emph{$n$-skeleton} of $J$ if $I\subskel J$ and
\[
\setarg{\alpha\in J}{\fan{\alpha}\cap I\neq\emptyset~\text{and}~\placeInFan{\ordPersFunc{J}(\alpha)}\leq n}\subseteq I.
\]
Informally, if a fan was not removed entirely in the transition from $J$ to $I$, then its first $n$ elements were not removed. Note that the relation $\subskeln{0}$ is in fact $\subskel$ and that $\subskeln{m}$ implies $\subskeln{n}$ whenever $m>n$.
\end{definition}

\begin{observation}
For every natural $n$, the relation $\subskeln{n}$ is transitive.
\end{observation}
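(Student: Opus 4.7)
The plan is a direct chase-the-definitions argument, with one technical claim about preservation of place-in-fan that carries all the weight. Suppose $I \subskeln{n} J$ and $J \subskeln{n} K$; I want to deduce $I \subskeln{n} K$.

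First I would note that the plain skeleton relation $\subskel$ is transitive: conditions (S1)--(S3) each compose under $\ordPersFunc{K} \circ \ordPersFunc{J}^{-1}$, so $I \subskel K$ is immediate. Next, given $\alpha \in K$ with $\fan{\alpha} \cap I \neq \emptyset$ and $\placeInFan{\ordPersFunc{K}(\alpha)} \leq n$, I apply $J \subskeln{n} K$ first: since $I \subseteq J$, the fan of $\alpha$ already meets $J$, so the hypothesis on $J \subskeln{n} K$ forces $\alpha \in J$.

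The heart of the argument is then to invoke $I \subskeln{n} J$, which requires upgrading the hypothesis from $\placeInFan{\ordPersFunc{K}(\alpha)} \leq n$ to $\placeInFan{\ordPersFunc{J}(\alpha)} \leq n$. I would isolate the following general claim: whenever $J \subskeln{n} K$ and $\alpha \in J$ has $\placeInFan{\ordPersFunc{K}(\alpha)} = m \leq n$, the place-in-fan computed in $\otp{J}$ agrees with the one computed in $\otp{K}$. To prove this, I would translate $J \subskeln{n} K$ into the statement that $\ordPersFunc{K}[J]$ is an $n$-skeleton of $\otp{K}$; then the fan of $\ordPersFunc{K}(\alpha)$ in $\otp{K}$ meets $\ordPersFunc{K}[J]$ (it contains $\ordPersFunc{K}(\alpha)$), so its first $n$ members all lie in $\ordPersFunc{K}[J]$. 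Consequently, no element below position $n$ is lost when collapsing via $\ordPersFunc{J} \circ \ordPersFunc{K}^{-1}$, and since this composition preserves tree-order (hence sends fans to fans order-preservingly), the image of $\alpha$ in $\otp{J}$ sits in position $m$ of its fan exactly as it did in $\otp{K}$.

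Granting the claim, the final step is instantaneous: with $\alpha \in J$, $\fan{\alpha} \cap I \neq \emptyset$, and $\placeInFan{\ordPersFunc{J}(\alpha)} \leq n$, the defining property of $I \subskeln{n} J$ yields $\alpha \in I$, as desired. The only real obstacle is the bookkeeping in the place-in-fan claim --- keeping track of how fans in $K$, $\otp{K}$, $J$, and $\otp{J}$ correspond under the order- and tree-order-preserving maps $\ordPersFunc{K}$ and $\ordPersFunc{J}$ --- rather than any new combinatorial idea.
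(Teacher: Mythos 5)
Your argument is correct, and since the paper states this as an \emph{Observation} without proof, your definition-chase is exactly the kind of routine verification the author is leaving to the reader. You correctly isolate the one step that is not a pure formality: having applied $J \subskeln{n} K$ to land in $J$, you still need $\placeInFan{\ordPersFunc{J}(\alpha)} \leq n$ before you can invoke $I \subskeln{n} J$, and this is where the $n$-skeleton condition on $J$ (not merely $J \subskel K$) is genuinely used.

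One small remark: you prove the strong form, namely that the place-in-fan is \emph{preserved exactly} when it is at most $n$. For the transitivity argument the weaker inequality $\placeInFan{\ordPersFunc{J}(\alpha)} \leq \placeInFan{\ordPersFunc{K}(\alpha)}$ already suffices, and this inequality follows more directly from the observation that $\ordPersFunc{J}$ and $\ordPersFunc{K}$ both carry fans to fans order-isomorphically (so the position of $\alpha$ in $\fan{\alpha}\cap J$ cannot exceed its position in $\fan{\alpha}\cap K$, because $J\subseteq K$). Your route establishes equality by using the $n$-skeleton property to show the first $n$ elements of the $\otp{K}$-fan survive into $\ordPersFunc{K}[J]$; that is not wasted effort, just more than is strictly needed, and it makes the informal slogan (``the first $n$ elements of a surviving fan are never removed'') explicit. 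Either way the argument stands, modulo the background fact -- used but not re-proved, reasonably -- that skeletons are closed under $\treedescendant$-parents, so that $\ordPersFunc{J}$ and $\ordPersFunc{K}$ really do restrict to bijections between fans.
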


\begin{definition}
	Let $\delta$ be an ordinal with Cantor normal form
	\[
	\delta = \omega^{\gamma_1} + \omega^{\gamma_2} + \dots + \omega^{\gamma_k}.
	\]
	For $\beta<\delta$ define $\cnfcut{\delta}{\beta}$ to be the least $i$ such that $\beta\leq \omega^{\gamma_1} + \dots + \omega^{\gamma_i}$.
	Define $\cc{\delta}{i} = \setarg{\beta < \delta}{\cnfcut{\delta}{\beta} = i}$. Explicitly, $\cc{\delta}{i}$ is the collection of $\beta<\delta$ such that $\sum_{j=1}^{i-1}\omega^{\gamma_j}<\beta\leq \sum_{j=1}^{i}\omega^{\gamma_j}$.
	Define $k_\delta = k$, the number of non-empty components in this decomposition of $\delta$.
\end{definition}

The following observation is a straightforward unravelling of the definitions. Note that, by definition, $\ordPersFunc{I}$ is a $\treeorder$-isomorphism.

\begin{observation}
If $I\subskel\delta$ for an ordinal $\delta<\omega^\omega$, then for every $\alpha\in I$ we have $\cnfcut{\delta}{\alpha} = \cnfcut{\delta}{\ordPersFunc{I}(\alpha)}$ and $\CBr{\alpha} = \CBr{\ordPersFunc{I}(\alpha)}$.
\end{observation}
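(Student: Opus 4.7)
The plan is to exploit property (S3) --- that $\ordPersFunc{I}$ is a $\treeorder$-isomorphism between $(I, \treeorder\restrictedto I)$ and $(\delta, \treeorder)$ --- and to note that both $\CBr{\cdot}$ and $\cnfcut{\delta}{\cdot}$ are determined by the $\treeorder$-structure of $\delta$ together with its $\leq$-order, which $\ordPersFunc{I}$ also preserves by (S2).

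For the CB rank, by the definition of $\treeorder$, whenever $\beta \treedescendant \alpha$ we have $\CBr{\alpha} = \CBr{\beta}+1$, and $\alpha$ has no $\treedescendant$-predecessor iff $\CBr{\alpha} = 0$; consequently $\CBr{\alpha}$ equals the length of the longest $\treedescendant$-chain ending at $\alpha$ in $(\delta, \treeorder)$. I would prove $\CBr{\alpha} = \CBr{\alpha'}$ (writing $\alpha' := \ordPersFunc{I}(\alpha)$) by induction on $\alpha'$. The inductive step reduces to the key claim: if $\alpha \in I$ has $\CBr{\alpha} \geq 1$, then some element of $\subfan{\alpha}$ lies in $I$. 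Granted this, the inductive hypothesis applied to such a predecessor $\gamma \in I$ combined with (S3) yields matching $\treedescendant$-chains below $\alpha$ in $I$ and below $\alpha'$ in $\delta$, giving both $\CBr{\alpha} \leq \CBr{\alpha'}$ and its reverse (the latter by pulling back via $\ordPersFunc{I}^{-1}$ a chain below $\alpha'$). The key claim is the serious use of (S1) and (S2): were $I \cap \subfan{\alpha} = \emptyset$, the fact that $\subfan{\alpha}$ is cofinal in $\alpha$ together with closure of $I$ would force an order-type count on the two sides of $\alpha$ that contradicts $\otp{I} = \delta$.

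For the CNF-cut equality, I would show by induction on $i$ that each partial sum $s_i := \omega^{\gamma_1} + \dots + \omega^{\gamma_i}$, for $i < k_\delta$, belongs to $I$ with $\ordPersFunc{I}(s_i) = s_i$. Each such $s_i$ is a $\treeorder$-maximum of $\delta$ of CB rank $\gamma_i$, and a cofinality-and-order-type argument analogous to the one above, combined with the CB-rank equality just established and with (S1), (S2), (S3), pins $s_i$ down as a fixed point of $\ordPersFunc{I}$. Once the markers $s_i$ are fixed, the intervals $\cc{\delta}{i}$ are preserved setwise by $\ordPersFunc{I}$, whence $\cnfcut{\delta}{\alpha} = \cnfcut{\delta}{\alpha'}$. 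The main obstacle throughout is this cofinality-and-order-type reasoning, which is what bridges the abstract $\treeorder$-isomorphism (S3) with the concrete numerical identities for CB rank and CNF-cut.
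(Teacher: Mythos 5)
The paper offers no detailed proof here — it simply remarks that the statement is a "straightforward unravelling of the definitions" and that $\ordPersFunc{I}$ is a $\treeorder$-isomorphism — so the comparison is between your attempt and that intended unravelling. Your overall plan (exploit (S3) to identify $\CBr$ with $\treeorder$-chain length, and use (S1)+(S2) to pin down the components) is in the right spirit. However, there is a genuine gap.

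Your key claim is: if $\alpha\in I$ and $\CBr{\alpha}\geq 1$ then $I\cap\subfan{\alpha}\neq\emptyset$, and you assert that this follows from (S1) and (S2) alone by "an order-type count on the two sides of $\alpha$." This is false. Take $\delta=\omega^3+1$ and
\[
I \;=\; \set{0,\omega^2}\cup(\omega^2,\omega^3].
\]
Then $I$ is closed in $\delta$ and $\otp{I}=1+1+(\omega^3+1)=\omega^3+1=\delta$, so (S1) and (S2) hold, yet $\omega^2\in I$, $\CBr{\omega^2}=2\geq 1$, and $I\cap\subfan{\omega^2}=\emptyset$. The order-type count you invoke fails precisely because the missing chunk $(0,\omega^2)$ is absorbed when followed by the much larger interval $(\omega^2,\omega^3]$; it would work only for the $\treeorder$-maximal point of a component, not for an arbitrary $\alpha$. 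So (S3) is indispensable in the key claim, and as written your proof of it is circular (it is, in fact, a consequence of the Observation rather than a route to it).

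There is a cleaner route that sidesteps the key claim entirely: work with the chain \emph{above} $\alpha$ rather than below. Because $\treeorder$ is an anti-tree order, for each $\alpha$ the set $\setarg{\beta<\delta}{\alpha\treeorder\beta}$ is a single finite chain, and its length equals $\CBr{s_i}-\CBr{\alpha}$ where $s_i=\sup\cc{\delta}{i}$ is the $\treeorder$-top of $\alpha$'s component (with the usual adjustment in the last component). Once you know $\cnfcut{\delta}{\alpha}=\cnfcut{\delta}{\alpha'}$ — and this part you can indeed get from (S1)+(S2), since the partial sums $s_i$ are forced into $I$ by an order-type count on the components, as you intend — property (S3) gives a bijection between $\setarg{\beta\in I}{\alpha\treeorder\beta}$ and $\setarg{\beta'<\delta}{\alpha'\treeorder\beta'}$, while $\setarg{\beta\in I}{\alpha\treeorder\beta}\subseteq\setarg{\beta<\delta}{\alpha\treeorder\beta}$. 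Comparing lengths yields $\CBr{\alpha}\leq\CBr{\alpha'}$ directly, and your easy direction gives the reverse. No existence claim about $I\cap\subfan{\alpha}$ is needed (that claim becomes a corollary once the Observation is proved).
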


For the remainder of this section, fix some ordinal $\delta < \omega^\omega$.

\begin{observation}
The collection $\setarg{\cc{\delta}{i}}{1\leq i\leq k_\delta}$ is a partition of $\delta$ into $k_\delta$ closed subsets of $\delta$. Moreover, $\cc{\delta}{k_\delta}$ is of order type $\omega^{\gamma_{k_{\delta}}}$, and for every $i<k_\delta$, the set $\cc{\delta}{i}$ is either a singleton (when $\gamma_i = 0$) or of order type $\omega^{\gamma_i} + 1$.
\end{observation}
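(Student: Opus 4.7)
The plan is to verify the observation directly from the definitions of $\cnfcut{\delta}{\beta}$ and $\cc{\delta}{i}$. Write $s_i = \omega^{\gamma_1} + \cdots + \omega^{\gamma_i}$, so that $s_0 = 0$ and $s_{k_\delta} = \delta$. Every $\beta < \delta$ admits a unique least index $i \in \{1,\ldots,k_\delta\}$ with $\beta \leq s_i$, namely $\cnfcut{\delta}{\beta}$; hence the fibres $\cc{\delta}{i}$ partition $\delta$ into $k_\delta$ non-empty blocks, which (up to how the definition treats $0$) are exactly the intervals $(s_{i-1},s_i]$ for $i<k_\delta$ and $(s_{k_\delta-1},\delta)$ for $i=k_\delta$.

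For closedness in $\delta$, the only candidates for limit points of $\cc{\delta}{i}$ lying outside the block are the left endpoint $s_{i-1}$ and, when $i=k_\delta$, the ambient supremum $\delta$. In the order topology on ordinals a point can only be a limit from below, so $s_{i-1}$ is right-isolated in the subspace topology of $\delta$ and cannot be the supremum of any increasing sequence drawn from $\cc{\delta}{i}$; and $\delta$ is simply not an element of $\delta$. Since the right endpoint $s_i$ is in $\cc{\delta}{i}$ whenever $i<k_\delta$, every block is closed.

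For the order types, the left shift $\xi \mapsto s_{i-1} + \xi$ realises an order isomorphism from $(0,\omega^{\gamma_i}]$ onto $\cc{\delta}{i}$ for $i<k_\delta$, and from $(0,\omega^{\gamma_{k_\delta}})$ onto $\cc{\delta}{k_\delta}$. When $\gamma_i = 0$ the domain is the singleton $\{1\}$; when $\gamma_i > 0$, the map $\alpha \mapsto 1+\alpha$ is an order isomorphism from $\omega^{\gamma_i}+1$ onto $(0,\omega^{\gamma_i}]$ (using the absorption identity $1+\omega^{\gamma_i} = \omega^{\gamma_i}$), and the same computation exhibits $(0,\omega^{\gamma_{k_\delta}})$ as having order type $\omega^{\gamma_{k_\delta}}$.

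Since the observation is essentially a direct unpacking of Cantor normal form, I do not expect any substantive obstacle; the only genuine care needed is with the edge behaviour at $\beta = 0$ in $\cc{\delta}{1}$ and with the degenerate case $\gamma_{k_\delta}=0$, for which the stated order type reflects the author's implicit convention rather than a hard computation.
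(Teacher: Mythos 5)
Your proof is correct, and since the paper states this as an unproved observation, your direct verification from the Cantor normal form—identifying $\cc{\delta}{i}$ with the half-open interval $(s_{i-1},s_i]$ (or $(s_{k_\delta-1},\delta)$ for the last block) and computing order types via the absorption identity $1+\omega^{\gamma}=\omega^{\gamma}$—is exactly the argument the author leaves implicit. Your care about the edge cases ($\beta=0$ is handled by the paper's earlier remark about omitting $0$, and $\gamma_{k_\delta}=0$ does not occur for the limit ordinals the paper actually works with) is appropriate.
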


The following lemma follows directly from the definitions and the uniqueness of the Cantor normal form.

\begin{lemma}
\label{skeletonIfComponents}
Let $I\subseteq\delta$. Then $I\subskeln{n}\delta$ if and only if $I\cap \cc{\delta}{i}\subskeln{n}\cc{\delta}{i}$ for every $1\leq i\leq k_\delta$. \qed
\end{lemma}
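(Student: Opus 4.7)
The plan is to unpack the skeleton and $n$-skeleton axioms and verify that each splits as a conjunction indexed by the components $\cc{\delta}{1},\dots,\cc{\delta}{k_\delta}$. All the content is concentrated in a single structural fact: the partition into components respects both the order topology on $\delta$ and the tree order $\treeorder$.

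For the topology, each $\cc{\delta}{i}$ is clopen in $\delta$, being a half-open interval between consecutive partial sums of the Cantor normal form of $\delta$ (with its right endpoint included whenever $i<k_\delta$). Thus (S1) splits by the usual fact that closedness in a finite clopen partition is equivalent to closedness of each piece. For the tree order, I would show that $\alpha\treeorder\beta$ forces $\alpha$ and $\beta$ to lie in the same component. Writing $\beta=\alpha+\omega^\gamma$ with $\gamma>\CBr{\alpha}$, the CNF of $\beta$ is obtained by absorbing the final run of summands of $\alpha$ of exponent below $\gamma$ and appending a single $\omega^\gamma$; comparing the resulting CNF against that of $\delta$ and using $\beta<\delta$, one sees that the appended $\omega^\gamma$ matches a summand already contributing to the component of $\alpha$. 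In particular, fans lie entirely within a single component, and the values $\CBr{\alpha}$ and $\placeInFan{\alpha}$ for $\alpha\in\cc{\delta}{i}$ are intrinsic to that component.

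Given these compatibility facts, the remainder is bookkeeping. Uniqueness of the CNF of $\otp{I}$ gives (S2): the total order type equals $\delta$ iff each $I\cap\cc{\delta}{i}$ attains the full order type of $\cc{\delta}{i}$. Once (S1) and (S2) hold componentwise, the restriction of $\ordPersFunc{I}$ to $I\cap\cc{\delta}{i}$ agrees, up to the canonical shift by $\sum_{j<i}\omega^{\gamma_j}$, with the componentwise $\ordPersFunc{I\cap\cc{\delta}{i}}$, so (S3) splits using the no-cross-component property of $\treeorder$. The extra clause in the definition of $\subskeln{n}$ is phrased entirely in terms of fans, each of which lives in a single component, so it splits likewise. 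The main obstacle, if any, is keeping the bookkeeping around $\ordPersFunc{I}$ versus the local $\ordPersFunc{I\cap\cc{\delta}{i}}$ tidy; once the two structural observations above are in place, the argument is purely formal.
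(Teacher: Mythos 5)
Your proposal is correct and is essentially the argument the paper has in mind: the paper offers no written proof, asserting only that the lemma ``follows directly from the definitions and the uniqueness of the Cantor normal form,'' and your write-up (components are clopen intervals, $\treeorder$ and fans never cross components, order types split by uniqueness of CNF, bookkeeping for $\ordPersFunc{I}$ versus $\ordPersFunc{I\cap\cc{\delta}{i}}$) is exactly the unpacking of that remark. One small caveat: the claim that the total order type equals $\delta$ \emph{iff} each $I\cap\cc{\delta}{i}$ attains full order type is not true from CNF uniqueness alone (e.g.\ for $\delta=\omega^2+\omega$ a piece of type $\omega^2$ followed by one of type $\omega$ still sums to $\delta$), so the ``only if'' direction of (S2) must also invoke (S1): a closed subset of $\cc{\delta}{i}$ of order type $\omega^{\gamma_i}$ would contain its supremum and hence have a maximum, which is impossible, so closedness forces each piece to have type exactly $\otp{\cc{\delta}{i}}$.
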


The following is a way of ensuring an infinite successive thinning out to skeletons is a skeleton.

\begin{definition}
\label{fanPreservingDef}
Let $\gamma$ be some ordinal. For each $i<\gamma$, let $I_i\subseteq\delta$. We say that the collection $\setcolon{I_i}{i<\gamma}$ is \emph{fan preserving} if whenever $\alpha\leq\gamma$ is a limit ordinal and $\beta < \delta$ is such that $\fan{\beta}\cap I_{i}$ is infinite for all $i < \alpha$, then $\fan{\beta}\cap \bigcap_{i < \alpha} I_{i}$ is infinite.
\end{definition}

\begin{lemma}
\label{intersectionOfFanPreserving}
Let $k$ be some positive integer. Let $\setcolon{I_i}{i<\gamma}$ be a fan preserving collection of $n$-skeletons of $\omega^k + 1$. Then $\mathbb{I} = \bigcap_{i<\gamma} I_i$ is an $n$-skeleton of $\omega^k + 1$.
\end{lemma}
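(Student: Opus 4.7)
The plan is to induct on $k$, decomposing $\omega^{k+1}+1$ through its top element and the subtrees rooted at the tree-children of the top. For the base case $k=1$, the only nontrivial fan in $\omega+1$ is $\subfan{\omega}=\{0,1,\ldots\}$; each $I_i$ contains $\omega$ (the max of a closed subset of the right order type), the first $n$ elements $\{0,\ldots,n-1\}$ of this fan (by the $n$-skeleton condition), and is cofinal in $\omega$. Applying fan-preservation at the relevant limit stages keeps $\mathbb{I}$ cofinal, and together with closure and the trivial tree structure (no branching below $\omega$) this makes $\mathbb{I}$ an $n$-skeleton of $\omega+1$.

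For the inductive step, I assume the lemma for $\omega^k+1$ and let $\delta=\omega^{k+1}+1$. Closure of $\mathbb{I}$ and the $n$-skeleton condition are immediate (the second because $\fan{\alpha}\cap\mathbb{I}\neq\emptyset$ forces $\fan{\alpha}\cap I_i\neq\emptyset$ for every $i$, hence $\alpha\in I_i$ for all $i$); the top $\omega^{k+1}$ is in every $I_i$, so in $\mathbb{I}$. Since each $\ordPersFunc{I_i}$ is a tree-isomorphism, $I_i$ meets the fan $\fan{\omega^k}=\{\omega^k\cdot j:j\geq 1\}$ in an infinite set, and fan-preservation (applied at $\gamma$ and any intermediate limit stage) then delivers an increasing sequence $m_1<m_2<\cdots$ with $\omega^k\cdot m_s\in\mathbb{I}$. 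For each such $m_s$, I would identify the block $T_{m_s}=(\omega^k(m_s-1),\omega^k\cdot m_s]$ with $\omega^k+1$ through the canonical translation-based order isomorphism; this is also a tree-isomorphism because Cantor--Bendixson rank and $\placeInFan$ depend only on the last Cantor-normal-form term and are therefore translation-invariant. Because $T_{m_s}$ is downward closed in the global tree, every fan of a non-root point of $T_{m_s}$ lies entirely inside $T_{m_s}$; this is exactly what allows the fan-preservation of $\{I_i\}_{i<\gamma}$ to restrict to $\{I_i\cap T_{m_s}\}_{i<\gamma}$, making the latter a fan-preserving collection of $n$-skeletons of $T_{m_s}\cong\omega^k+1$. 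The inductive hypothesis then yields that $\mathbb{I}\cap T_{m_s}$ is an $n$-skeleton of $T_{m_s}$ of order type $\omega^k+1$. Assembling, $\mathbb{I}=\{\omega^{k+1}\}\cup\bigsqcup_{s}(\mathbb{I}\cap T_{m_s})$ has order type $(\omega^k+1)\cdot\omega+1=\omega^{k+1}+1$, and $\ordPersFunc{\mathbb{I}}$ is a tree-isomorphism patched from the block-level isomorphisms together with $\omega^k\cdot m_s\mapsto\omega^k\cdot s$ at the top level.

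The main obstacle is the careful verification that the restricted collection $\{I_i\cap T_{m_s}\}_i$ really inherits both the $n$-skeleton property on each piece and the fan-preservation property of the whole collection: this depends on the downward-closedness of $T_{m_s}$ in the global tree (so that the relevant fans do not leak out of $T_{m_s}$) and on the translation-invariance of Cantor--Bendixson rank and $\placeInFan$ (so that the identification with $\omega^k+1$ transfers all the skeleton-theoretic structure correctly, including the ``first $n$'' condition on fans).
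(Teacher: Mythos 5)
Your proof is correct and follows essentially the same route as the paper's: induct on $k$, use fan preservation to see that $\mathbb{I}$ meets the top fan $\subfan{\omega^k}$ in an infinite set containing its first $n$ elements, observe that the collection restricts to a fan-preserving collection of $n$-skeletons of each subtree rooted at a surviving fan element, apply the induction hypothesis there, and reassemble. Your blocks $(\omega^k(m_s-1),\omega^k\cdot m_s]$ are exactly the subtrees $\subtree{}{\omega^k\cdot m_s}$ used in the paper, so the decompositions coincide; you merely spell out the translation-invariance and downward-closedness verifications that the paper leaves implicit.
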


\begin{proof}
By induction on $k$. Every skeleton of $\omega^{k}+1$ must contain the point $\omega^{k}$ and so $\omega^{k}\in \mathbb{I}$. Additionally, every skeleton intersects infinitely with $\subfan{\omega^{k}}$. By fan preservation, $\subfan{\omega^{k}}\cap\mathbb{I}$ is infinite, and contains the first $n$ elements of $\subfan{\omega^k}$.

Enumerate $\subfan{\omega^{k}}\cap\mathbb{I} = \setcolon{\beta_i}{i<\omega}$. For every $\alpha<\gamma$, we have $\subtree{}{\beta_i}\cap I_{\alpha}\subskeln{n}\subtree{}{\beta_i}$. The collection $\set{\subtree{}{\beta_i}\cap I_\alpha}_{\alpha<\gamma}$ of $n$-skeletons of $\subtree{}{\beta_i}\cong\omega^{k-1} +1$ is fan preserving. By the induction hypothesis $\subtree{}{\beta_i}\cap \mathbb{I}$ is an $n$-skeleton of $\subtree{}{\beta_i}$. Thus,
\[
\mathbb{I} = \bigcup_{i\in\omega} (\subtree{}{\beta_i}\cap \mathbb{I})\cup\set{\omega^{k}}
\]
is an $n$-skeleton of $\omega^{k}+1$.
\end{proof}

\begin{corollary}
\label{fanPreservingCorOmega}
Let $\gamma$ be some ordinal. Whenever $\setcolon{I_i}{i<\gamma}$ is a fan preserving collection of $n$-skeletons of $\delta$, then $\mathbb{I} = \bigcap_{i<\gamma} I_i$ is an $n$-skeleton of $\delta$. \qed
\end{corollary}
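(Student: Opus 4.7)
The plan is to reduce to Lemma \ref{intersectionOfFanPreserving} via the Cantor normal form decomposition of $\delta$. Write $\delta = \omega^{\gamma_1} + \dots + \omega^{\gamma_k}$. By Lemma \ref{skeletonIfComponents}, $\mathbb{I}$ is an $n$-skeleton of $\delta$ if and only if $\mathbb{I} \cap \cc{\delta}{i}$ is an $n$-skeleton of $\cc{\delta}{i}$ for every $1 \leq i \leq k_\delta$; the same lemma applied to each $I_j$ yields that $I_j \cap \cc{\delta}{i} \subskeln{n} \cc{\delta}{i}$. A quick unpacking of Definition \ref{fanPreservingDef} confirms that fan preservation descends to each restricted collection $\{I_j \cap \cc{\delta}{i}\}_{j<\gamma}$, since every fan lies entirely inside a single component.

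For each $i < k_\delta$ the component $\cc{\delta}{i}$ is either a singleton (the trivial case $\gamma_i = 0$) or of order type $\omega^{\gamma_i} + 1$, and Lemma \ref{intersectionOfFanPreserving} applies directly to the restricted collection.

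The main obstacle is the final component $\cc{\delta}{k_\delta}$, which has order type $\omega^{\gamma_{k_\delta}}$ rather than $\omega^{\gamma_{k_\delta}} + 1$, so Lemma \ref{intersectionOfFanPreserving} does not apply off the shelf. My plan is to formally adjoin the supremum $\delta$: translation by the prefix $\sum_{j<k_\delta}\omega^{\gamma_j}$ is an order-isomorphism $\cc{\delta}{k_\delta} \cup \{\delta\} \to \omega^{\gamma_{k_\delta}} + 1$ that commutes with $\CBr{\cdot}$ and $\treeorder$ (a routine check using the fact that the prefix is absorbed in the relevant ordinal sums, so CB ranks and tree comparisons are determined by the ``tail'' alone). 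Each $(I_j \cap \cc{\delta}{k_\delta}) \cup \{\delta\}$ thereby becomes an $n$-skeleton of $\omega^{\gamma_{k_\delta}} + 1$; fan preservation is retained trivially because the lone added point $\delta$ belongs to every extended skeleton. Lemma \ref{intersectionOfFanPreserving} now produces an $n$-skeleton of $\omega^{\gamma_{k_\delta}} + 1$, from which removing the common top $\delta$ recovers the desired $n$-skeleton of $\cc{\delta}{k_\delta}$, and a final appeal to Lemma \ref{skeletonIfComponents} finishes the argument.
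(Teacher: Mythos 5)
Your proposal is correct and is essentially the argument the paper intends by stating this as an immediate corollary: reduce componentwise via Lemma \ref{skeletonIfComponents} and apply Lemma \ref{intersectionOfFanPreserving} to each component. Your extra care with the final component $\cc{\delta}{k_\delta}$ (adjoining the supremum $\delta$ to land in a copy of $\omega^{\gamma_{k_\delta}}+1$) is a legitimate and correctly handled wrinkle that the paper glosses over.
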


\begin{lemma}
\label{goodIncreasingImpliesFanPreserving}
Let $\setcolon{I_i}{i<\gamma}$, for some ordinal $\gamma$, and assume that there is some $\sigma:\gamma\to \Nats$ with finite fibers $($that is $|\sigma^{-1}[\set{n}]|<\infty$ for each $n\in\Nats)$ such that $I_{\alpha}\subskeln{\sigma(\alpha)}\bigcap_{i<\alpha} I_i$ for all $i<\gamma$. Then $\setcolon{I_i}{i<\gamma}$ is a fan preserving collection of skeletons of $I_0$.
\end{lemma}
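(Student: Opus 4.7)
My plan is to prove, by transfinite induction on $\alpha \leq \gamma$, the stronger claim that for every such $\alpha$: each $I_i$ with $i < \alpha$ is a skeleton of $I_0$, the collection $\{I_i : i < \alpha\}$ is fan-preserving, and $\bigcap_{i<\alpha} I_i$ is itself a skeleton of $I_0$. The successor step is quick: the hypothesis $I_\beta \subskeln{\sigma(\beta)} \bigcap_{i<\beta} I_i$ gives $I_\beta \subseteq \bigcap_{i<\beta} I_i$, so $\bigcap_{i<\beta+1} I_i = I_\beta$, and transitivity of $\subskel$ combined with the inductive claim $\bigcap_{i<\beta} I_i \subskel I_0$ delivers $I_\beta \subskel I_0$.

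The hard part is the limit case. Fix a limit $\alpha \leq \gamma$ and an ordinal $\beta$ with $F := \fan{\beta}$ satisfying $F \cap I_i$ infinite for every $i < \alpha$; I aim to show $F \cap \bigcap_{i<\alpha} I_i$ is infinite. For each $n \in \omega$, finite fibers of $\sigma$ guarantee that $k_n^* := \max\{k < \alpha : \sigma(k) \leq n\}$ is a well-defined ordinal strictly below the infinite limit $\alpha$, and $\sigma(k) > n$ throughout $(k_n^*, \alpha)$. Enumerate $F = \{f_0 < f_1 < \cdots\}$ in increasing order.

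I then claim that for each $n$ there is a value $m_n \in \omega$ such that the $F$-position of the $n$-th element of $F \cap I_k$ equals $m_n$ for every $k \in (k_n^*, \alpha)$. At a successor $k = k' + 1$ with $k' \geq k_n^*$, the hypothesis gives $\bigcap_{j<k} I_j = I_{k'}$, and $\subskeln{\sigma(k)}$ with $\sigma(k) > n$ forces the first $n+1$ elements of $F \cap I_k$ to agree with those of $F \cap I_{k'}$. At a limit $k$ with $k_n^* < k < \alpha$, the outer inductive hypothesis yields $\bigcap_{j<k} I_j \subskel I_0$, so $F \cap \bigcap_{j<k} I_j$ is infinite (every skeleton of $I_0$ meets every fan of $I_0$ infinitely); its first $n+1$ elements are precisely $f_{m_0}, \ldots, f_{m_n}$ by the already-established stabilization for indices in $(k_n^*, k)$ together with monotonicity of $(I_j)_{j < k}$, and $\subskeln{\sigma(k)}$ transfers this initial segment into $I_k$.

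Since $(I_k)_{k<\alpha}$ is $\subseteq$-decreasing and $f_{m_n} \in I_{k_n^* + 1}$, we obtain $f_{m_n} \in I_k$ for all $k < \alpha$, hence $f_{m_n} \in F \cap \bigcap_{k<\alpha} I_k$. Because $n \mapsto m_n$ is strictly increasing, $\{f_{m_n} : n \in \omega\}$ is an infinite subset of $F \cap \bigcap_{k<\alpha} I_k$, establishing fan-preservation at $\alpha$. The remaining assertion $\bigcap_{i<\alpha} I_i \subskel I_0$ then follows from Corollary \ref{fanPreservingCorOmega}, applied to the now fan-preserving collection $\{I_i : i < \alpha\}$ of skeletons of $I_0$, closing the induction. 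The main obstacle is the stabilization at limit steps, where the finite-fiber property of $\sigma$ is essential to prevent the $n$-th fan element from drifting past some cofinal stage.
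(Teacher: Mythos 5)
Your proof takes essentially the same route as the paper's: a transfinite induction in which the finite-fiber hypothesis on $\sigma$ produces, for each $n$, a cut-off stage $k_n^*$ (the paper's $i_n$) past which $\sigma>n$, so that the first $n+1$ elements of $\fan{\beta}\cap I_k$ stabilize; the paper compresses the stabilization into the single assertion that $I_j\subskeln{n}I_{i_n}$ for all $i_n<j<\alpha$, while you carry out that sub-induction explicitly.

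There is, however, one genuinely wrong justification. The parenthetical claim that every skeleton of $I_0$ meets every fan of $I_0$ infinitely is false: a skeleton may prune an entire subtree and with it an entire fan. For instance $\setarg{\alpha}{\omega+1\le\alpha\le\omega^2}$ is a skeleton of $\omega^2+1$ that is disjoint from $\subfan{\omega}$; only the fan of the root is guaranteed to be met. So the step from $\bigcap_{j<k}I_j\subskel I_0$ to infiniteness of $F\cap\bigcap_{j<k}I_j$ is invalid as stated. The correct justification at a limit stage $k<\alpha$ is precisely the part of your own inductive hypothesis that you did not invoke here: $\setcolon{I_j}{j<k}$ is fan-preserving, and since $F\cap I_j$ is infinite for every $j<\alpha$, hence for every $j<k$, fan-preservation gives that $F\cap\bigcap_{j<k}I_j$ is infinite. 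Alternatively, you can sidestep infiniteness entirely: by your stabilization for indices in $(k_n^*,k)$ together with monotonicity, $f_{m_0},\dots,f_{m_n}\in\bigcap_{j<k}I_j\subseteq I_{k_n^*+1}$, and since these are the first $n+1$ elements of $F\cap I_{k_n^*+1}$ they are also the first $n+1$ elements of the smaller set $F\cap\bigcap_{j<k}I_j$. Either repair closes the gap, and the rest of your argument is sound.
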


\begin{proof}
By induction on $\gamma$. Let $\alpha\leq \gamma$ be a limit ordinal such that $\set{I_i}_{i<j}$ is fan preserving for every $j<\alpha$. By Corollary \ref{fanPreservingCorOmega}, $I_{j}\subskel I_0$ for every $j<\alpha$.

Let $\beta$ be such that $\fan{\beta}\cap I_i$ is infinite for all $i<\alpha$. For an arbitrary $n\in\Nats$, let $i_n<\alpha$ be maximal such that $\sigma(i_n)\leq n$. By an induction argument on $j$, $I_j\subskeln{n} I_{i_n}$ for every $i_n<j<\alpha$. Thus, $|\fan{\beta}\cap I_\alpha| \geq n$ for any $n\in\Nats$.
\end{proof}

\subsection{The sets $F(\omega^k)^r_n$}
We will thin out colourings using families of skeletons as in the statement of Lemma \ref{goodIncreasingImpliesFanPreserving}. This compromise prevents us from controlling the colouring on entire sets of the form $\subtree{=n}{\alpha}$, but it allows us to control it on ``large'' sets of a specific form.

Let $k,r\in\Nats$. We recursively define a set $F(\omega^{k})^r_n\subseteq\subtree{=n}{\omega^k}$ for each $n\leq k$ (see Figure \ref{largeSetsFigure})
	\[
	F(\omega^k)^r_n =\begin{cases}
	\set{\omega^k} &,n = k
	\\
	\bigcup_{\beta\in F(\omega^k)^r_{n+1}}\setarg{\gamma\in\subfan{\beta}}{\placeInFan{\gamma} > r} &, n<k
	\end{cases}
	\]
An equivalent non-recursive definition, when $n<k$, is:
	\[
		F(\omega^k)^r_n = \setarg{\beta\in\subtree{=n}{\omega^k}}{\min\setcolon{\placeInFan{\beta'}}{\beta\treeorder\beta'\treeorder \omega^k} > r, \placeInFan{\beta} > r}.
	\]
	Note that $F(\omega^k)^0_n = \subtree{=n}{\omega^k}$.
	
\begin{figure}
\caption{The sets $F(\omega^3)^1_n$}
\label{largeSetsFigure}
\begin{tikzpicture}[x=8,y=30]
\foreach \i in{0}
{
	\node[fill=black,circle, inner sep=2.5pt] at (\i,4) {};
	\foreach \j in{-12,0,12}
	{
		\node[fill=black,circle, inner sep=2pt] at (\i+\j,2.5) {};
		\draw (\i,4) -- (\i+\j,2.5);
		\foreach \k in{-4,0,4}
			{
			\node[fill=black,circle, inner sep=1pt] at (\i+\j +\k,1) {};
			\draw (\i+\j,2.5) -- (\i+\j +\k,1);
			\foreach \l in{-1,0,1}
				{
				\node[fill=black,circle, inner sep=0.8pt] at (\i+\j+\k+\l,0) {};
				\draw (\i+\j+\k,1) -- (\i+\j +\k+\l,0);
				}
			\node[label=right:\tiny $...$] at (\i+\j+\k+1/2,0) {};
			}
		\node[label=right:$\dots$] at (\i+\j+3.5,1) {};
	}
	\node[label=right:\Huge$\dots$] at (\i+12,2.5) {};
}
\draw[thick,rounded corners=8pt] (-1,4-8/30) rectangle (1,4+8/30);

\draw[thick,rounded corners=5pt] (-1,2.5-8/45) rectangle (16, 2.5+8/45);

\draw[semithick,rounded corners] (-0.75,1-8/60) rectangle (6, 1+8/60);
\draw[semithick,rounded corners] (11.25,1-8/60) rectangle (18, 1+8/60);

\foreach \r in{0,1}
{
\draw[rounded corners=2.5pt] (12*\r-0.4,-8/90) rectangle (12*\r+2.3, 8/90);
\draw[rounded corners=2.5pt] (12*\r+3.6,-8/90) rectangle (12*\r+6.3, 8/90);
}
\node at (22,4) {$F(\omega^3)^1_3$};
\node at (22,2.5) {$F(\omega^3)^1_2$};
\node at (22,1) {$F(\omega^3)^1_1$};
\node at (22,0) {$F(\omega^3)^1_0$};
\end{tikzpicture}
\end{figure}

We extend the definition to an arbitrary $\alpha<\omega^\omega$. Denote $k=\CBr{\alpha}$. Observe that $\subtree{}{\alpha}\cong \omega^k +1$ and define
\[
F(\alpha)^r_n = \ordPersFunc{\subtree{}{\alpha}}^{-1}[F(\omega^k)^r_n]
\]

\begin{definition}
Fix some $\alpha<\omega^\omega$ and $n\leq \CBr{\alpha}$. Define
\begin{gather*}
\lrg{\alpha}{r}{n} = \setarg{A\subseteq\subtree{=n}{\alpha}}{F(\alpha)^r_n\subseteq A}
\\
\lrg{\alpha}{}{n} = \bigcup_{r\in\Nats} \lrg{\alpha}{r}{n}.
\end{gather*}
\end{definition}

\begin{example}
As an instructive example, the set
\[
\bigcup_{k\in\omega}\setarg{\omega\cdot k +l}{l > k}
\]
is \textbf{not} an element of $\lrg{\omega^2}{}{0}$.
\end{example}

\begin{observation}
For a fixed $n$, the set $\lrg{\alpha}{}{n}$ is closed under finite intersections. This is due to the fact $\lrg{\alpha}{r}{n}\subseteq\lrg{\alpha}{s}{n}$, whenever $r<s$. The set $\lrg{\alpha}{}{n}$ is the filter on $\subtree{=n}{\alpha}$ generated by $\set{F(\alpha)^r_n}_{r\in\Nats}$.
\end{observation}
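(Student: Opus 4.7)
The plan is to verify three easy facts that together constitute the observation: the reverse nesting $F(\alpha)^s_n\subseteq F(\alpha)^r_n$ for $r<s$, the consequent closure of $\lrg{\alpha}{}{n}$ under binary intersections, and the identification of $\lrg{\alpha}{}{n}$ with the filter generated by the family $\set{F(\alpha)^r_n}_{r\in\Nats}$.

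First I would establish the nesting of the generators. Using the non-recursive characterisation of $F(\omega^k)^r_n$ stated just before the observation, membership reduces to requiring every relevant $\placeInFan{\cdot}$ value to exceed $r$; strengthening $r$ to $s$ only tightens this constraint, so $F(\omega^k)^s_n\subseteq F(\omega^k)^r_n$. Transporting through the $\treeorder$-isomorphism $\ordPersFunc{\subtree{}{\alpha}}^{-1}$ yields the same for an arbitrary $\alpha<\omega^\omega$. The stated inclusion $\lrg{\alpha}{r}{n}\subseteq\lrg{\alpha}{s}{n}$ is then immediate: any superset of $F(\alpha)^r_n$ is a fortiori a superset of the smaller $F(\alpha)^s_n$.

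Closure under finite intersections now reduces to the binary case. If $A_i\supseteq F(\alpha)^{r_i}_n$ for $i=1,2$, set $s=\max\set{r_1,r_2}$, so that both $A_i\in \lrg{\alpha}{s}{n}$; then $A_1\cap A_2\supseteq F(\alpha)^s_n$, witnessing $A_1\cap A_2\in\lrg{\alpha}{s}{n}\subseteq\lrg{\alpha}{}{n}$. For the final filter identification, observe that $\lrg{\alpha}{}{n}$ is manifestly upward-closed under $\subseteq$ within $\subtree{=n}{\alpha}$, is closed under finite intersections by the previous step, and contains each generator $F(\alpha)^r_n\in\lrg{\alpha}{r}{n}$; conversely, any filter containing the generators must contain every superset of every $F(\alpha)^r_n$ in $\subtree{=n}{\alpha}$, which is exactly $\lrg{\alpha}{}{n}$. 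No step here presents a genuine obstacle---the observation is a bookkeeping remark that lets us treat $\lrg{\alpha}{}{n}$ as a filter in the later thinning-out arguments.
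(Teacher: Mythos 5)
Your proof is correct and takes exactly the approach the paper has in mind: the paper gives no explicit proof (it is an Observation), but its parenthetical ``This is due to the fact $\lrg{\alpha}{r}{n}\subseteq\lrg{\alpha}{s}{n}$ whenever $r<s$'' is precisely the nesting you establish via $F(\alpha)^s_n\subseteq F(\alpha)^r_n$, and your filter-identification is the routine bookkeeping the paper leaves implicit.
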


\begin{observation}
	Let $\delta<\omega^\omega$ be an ordinal, let $I\subskel \delta$ and let $\alpha\in I$. Then $\ordPersFunc{I}[F\cap I]\in\lrg{\ordPersFunc{I}(\alpha)}{r}{n}$, for any $F\in\lrg{\alpha}{r}{n}$.
\end{observation}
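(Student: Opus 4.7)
The plan is to separate the conclusion $\ordPersFunc{I}[F \cap I] \in \lrg{\ordPersFunc{I}(\alpha)}{r}{n}$ into its two defining inclusions. The containment $\ordPersFunc{I}[F \cap I] \subseteq \subtree{=n}{\ordPersFunc{I}(\alpha)}$ will be immediate from $F \subseteq \subtree{=n}{\alpha}$ together with the fact that $\ordPersFunc{I}$ preserves both $\treeorder$ and $\CB$, by (S3) and the earlier observation on skeletons. The substantive containment $F(\ordPersFunc{I}(\alpha))^r_n \subseteq \ordPersFunc{I}[F\cap I]$ I will reduce, using $F(\alpha)^r_n \subseteq F$, to proving that $\ordPersFunc{I}^{-1}$ sends $F(\ordPersFunc{I}(\alpha))^r_n$ into $F(\alpha)^r_n$.

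The engine will be a structural lemma: whenever $\gamma,\alpha \in I$ and $\gamma \treeorder \alpha$, the entire ambient tree-interval $\{\gamma' : \gamma \treeorder \gamma' \treeorder \alpha\}$ lies inside $I$. Each $\treedescendant$-step raises $\CB$ by exactly one, so this interval has exactly $\CBr{\alpha}-\CBr{\gamma}-1$ elements in $\delta$, and the analogous interval between $\ordPersFunc{I}(\gamma)$ and $\ordPersFunc{I}(\alpha)$ in $\otp{I}$ has the same size by $\CB$-preservation; every element of the second interval admits an $\ordPersFunc{I}$-preimage in $I$ which, by $\treeorder$-preservation, lies in the first; matching the two counts forces these preimages to exhaust the ambient interval. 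A direct corollary will be that for every $\gamma \in I$ whose tree-parent $\hat{\gamma}$ lies in $I$, $\ordPersFunc{I}$ restricts to an order-preserving bijection $\fan{\gamma} \cap I \to \fan{\ordPersFunc{I}(\gamma)}$, from which $\placeInFan{\ordPersFunc{I}(\gamma)} \leq \placeInFan{\gamma}$ will follow: if $\gamma$ is the $q$-th element of $\fan{\gamma} \cap I$, then $\ordPersFunc{I}(\gamma)$ is the $q$-th element of the full fan $\fan{\ordPersFunc{I}(\gamma)}$, while $\gamma$'s position $\placeInFan{\gamma}$ in $\fan{\gamma}$ is at least $q$.

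With the two pieces above in place, the proof will conclude as follows. Fix $\beta \in F(\ordPersFunc{I}(\alpha))^r_n$ and set $\gamma = \ordPersFunc{I}^{-1}(\beta)$. The structural lemma places every $\gamma'$ with $\gamma \treeorder \gamma' \treeorder \alpha$ inside $I$, and $\ordPersFunc{I}$ identifies these with the tree-intermediates between $\beta$ and $\ordPersFunc{I}(\alpha)$. The hypothesis on $\beta$ provides $\placeInFan{\ordPersFunc{I}(\gamma')} > r$ both for $\gamma' = \gamma$ and for every strict intermediate, and the monotonicity inequality lifts each to $\placeInFan{\gamma'} > r$. Consequently $\gamma \in F(\alpha)^r_n \subseteq F$ and $\beta \in \ordPersFunc{I}[F \cap I]$, finishing the observation. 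I expect the structural lemma on tree-intervals to be the only delicate step; the $\placeInFan$ monotonicity and the final wrap-up are essentially bookkeeping along fans.
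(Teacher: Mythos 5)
Your proof is correct, and since the paper states this as an Observation with no proof supplied, your argument is precisely the intended unwinding of the definitions: the key points (that a skeleton contains every $\treeorder$-intermediate between two of its points, by the Cantor--Bendixson-rank counting argument, and that consequently $\placeInFan{\ordPersFunc{I}(\gamma)}\leq\placeInFan{\gamma}$ whenever the fan of $\gamma$ meets $I$ at $\gamma$) are exactly what makes $F(\ordPersFunc{I}(\alpha))^r_n$ pull back into $F(\alpha)^r_n$ with the same $r$. No gaps.
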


\begin{observation}
\label{middleLayerLarge}
Let $\theta<\omega^\omega$ be some ordinal and let $A\subseteq\subtree{=n}{\theta}$ for some $n\leq\CBr{\theta}$. Then for any $n\leq k\leq \CBr{\theta}$, $A\in\lrg{\theta}{r}{n}$ if and only if
\[
\setarg{\beta\in\subtree{=k}{\theta}}{\subtree{}{\beta}\cap A\in\lrg{\beta}{r}{n}}\in\lrg{\theta}{r}{k}
\]
\end{observation}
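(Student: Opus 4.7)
The plan is to unfold both sides of the equivalence to containments of the form $F(\cdot)^r_{(\cdot)}\subseteq (\cdot)$ and then exploit the fact that the non-recursive description of $F(\theta)^r_n$ is purely a path condition in the tree $(\subtree{}{\theta},\treeorder)$: an element $\gamma\in\subtree{=n}{\theta}$ lies in $F(\theta)^r_n$ iff $\placeInFan{\gamma'}>r$ for every $\gamma'$ with $\gamma\treeorder\gamma'\treeorder\theta$, and $\placeInFan{\gamma}>r$ as well (interpreting place-in-fan via the canonical $\treeorder$-isomorphism $\subtree{}{\theta}\cong\omega^{\CBr{\theta}}+1$, under which places-in-fan are preserved). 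The whole argument is then a path-splitting bookkeeping at the intermediate level $k$.

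Concretely, for each $\gamma\in\subtree{=n}{\theta}$ I would let $\beta(\gamma)$ be the unique element of $\subtree{=k}{\theta}$ with $\gamma\treeorder\beta(\gamma)$ (taking $\beta(\gamma)=\gamma$ when $k=n$). The tree path from $\gamma$ up to $\theta$ then decomposes uniquely as the concatenation of the path from $\gamma$ to $\beta(\gamma)$ (which lives in $\subtree{}{\beta(\gamma)}$ and governs membership in $F(\beta(\gamma))^r_n$) and the path from $\beta(\gamma)$ to $\theta$ (which governs membership in $F(\theta)^r_k$). The common endpoint $\beta(\gamma)$ is handled by noting that $\placeInFan{\beta(\gamma)}$ appears exactly once, as part of the $F(\theta)^r_k$ condition.

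For the $(\Rightarrow)$ direction, I would fix $\beta\in F(\theta)^r_k$ and any $\gamma\in F(\beta)^r_n$, concatenate the two path conditions into a single chain from $\gamma$ up to $\theta$ with all places-in-fan exceeding $r$, and conclude $\gamma\in F(\theta)^r_n\subseteq A$, i.e.\ $F(\beta)^r_n\subseteq \subtree{}{\beta}\cap A$. For $(\Leftarrow)$, I would fix $\gamma\in F(\theta)^r_n$, observe that the path conditions for $\beta(\gamma)\in F(\theta)^r_k$ and for $\gamma\in F(\beta(\gamma))^r_n$ are both sub-paths of the one for $\gamma\in F(\theta)^r_n$, invoke the right-hand hypothesis to get $F(\beta(\gamma))^r_n\subseteq \subtree{}{\beta(\gamma)}\cap A$, and deduce $\gamma\in A$.

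The only genuinely non-trivial input is the equivalence of the recursive and non-recursive definitions of $F(\omega^k)^r_n$ (which is stated in the excerpt) together with the fact that the isomorphism $\ordPersFunc{\subtree{}{\theta}}$ preserves $\placeInFan{\cdot}$; once those are in hand, the whole argument is just tracking paths. The degenerate cases $k=n$ (both sides collapse to $F(\theta)^r_n\subseteq A$) and $k=\CBr{\theta}$ (where $\subtree{=k}{\theta}=\{\theta\}$ and $F(\theta)^r_n=F(\theta)^r_n\cap A$) fall out immediately. I do not anticipate a real obstacle here; the main thing to get right is a clean, symmetric bookkeeping of the path and the role of $\placeInFan{\beta(\gamma)}$.
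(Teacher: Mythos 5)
Your proposal is correct, and it is exactly the unfolding the paper leaves implicit by labelling this an Observation: both sides reduce to containments of $F$-sets, and the path-splitting at level $k$ amounts to the identity $F(\theta)^r_n=\bigcup_{\beta\in F(\theta)^r_k}F(\beta)^r_n$, with $\placeInFan{\beta(\gamma)}$ counted once on the $F(\theta)^r_k$ side. The two inputs you flag (equivalence of the recursive and non-recursive definitions, and preservation of $\placeInFan{\cdot}$ under $\ordPersFunc{\subtree{}{\theta}}$) are indeed all that is needed, and the degenerate cases $k=n$ and $k=\CBr{\theta}$ collapse as you say.
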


\begin{corollary}
\label{largeSetsContainSubtree}
Let $\theta$ be some ordinal and let $l_1 < \dots < l_k<\CBr{\theta}$ and $A_1,\dots,A_k$ be such that $A_i\in\lrg{\theta}{}{l_i}$. Then there exists some $X\subseteq \bigcup_{n=1}^k A_n$ closed in its supremum with $\otp{X}= \omega^k$. Moreover, $X$ can be chosen such that $\ordPersFunc{X}[X\cap F]\in\lrg{\omega^k}{}{i}$ for any $1\leq i\leq k$ and $F\in\lrg{\theta}{}{l_i}$.
\end{corollary}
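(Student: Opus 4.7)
The plan is to induct on $k$, using Observation~\ref{middleLayerLarge} together with the recursive structure of the $F$-sets to pass from $\theta$ to the subtrees $\subtree{}{\beta}$ of points $\beta$ living at the top level $l_k$.

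For the base case $k=1$, choose $r_0$ with $A_1\supseteq F(\theta)^{r_0}_{l_1}$ and extract any $\omega$-sequence from $F(\theta)^{r_0}_{l_1}$: this is an $X$ of order type $\omega$, closed in its supremum, with the ``moreover'' clause following from $F(\theta)^{r_0}_{l_1}\subseteq F(\theta)^{r}_{l_1}$ for all $r\leq r_0$ and from the fact that the tail of $X$ sits inside $F(\theta)^{r}_{l_1}$ for any $r>r_0$ by choosing the sequence with sufficiently large $\placeInFan{\cdot}$-values.

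For the inductive step, fix $r_0$ with $A_i\supseteq F(\theta)^{r_0}_{l_i}$ for every $i\leq k$. Directly from the definition of the $F$-sets, whenever $\beta\in F(\theta)^{r_0}_{l_k}$ one has $F(\theta)^{r_0}_{l_i}\cap\subtree{}{\beta} = F(\beta)^{r_0}_{l_i}$ for each $i<k$, so $A_i\cap\subtree{}{\beta}\in\lrg{\beta}{r_0}{l_i}$. Since $\CBr{\beta} = l_k > l_{k-1} > \dots > l_1$, the inductive hypothesis applied inside $\subtree{}{\beta}$ produces a set $X_\beta\subseteq \bigcup_{i<k}(A_i\cap\subtree{}{\beta})$ of order type $\omega^{k-1}$, closed in its supremum (which, by construction, we may arrange to equal $\beta$), and satisfying the ``moreover'' condition within $\subtree{}{\beta}$. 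Now choose $\beta_1<\beta_2<\dots$ in $F(\theta)^{r_0}_{l_k}$ with $\placeInFan{\beta_j}\to\infty$, and put
\[
X \;=\; \bigcup_{j<\omega}\bigl(X_{\beta_j}\cup\set{\beta_j}\bigr).
\]
The identity $(\omega^{k-1}+1)\cdot\omega = \omega^k$ (using left-absorption $1+\omega^{k-1}=\omega^{k-1}$) gives $\otp{X}=\omega^k$; closure in $\sup X = \sup_j\beta_j$ follows since each $X_{\beta_j}$ is closed in $\beta_j$ and each $\beta_j$ itself lies in $X$.

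For the ``moreover'' part, any $F\in\lrg{\theta}{}{l_i}$ contains $F(\theta)^r_{l_i}$ for some $r$. At the top level $i=k$, the divergence $\placeInFan{\beta_j}\to\infty$ places all but finitely many $\beta_j$'s inside $F(\theta)^r_{l_k}\subseteq F$, and under $\ordPersFunc{X}$ these form a cofinite subset of the top-level fan of $\omega^k$, witnessing the required membership. For $i<k$, every $\beta_j$ with $\placeInFan{\beta_j}>r$ satisfies $F\cap\subtree{}{\beta_j}\supseteq F(\beta_j)^{r}_{l_i}$, so the inductive ``moreover'' condition applied to $X_{\beta_j}$ gives a large subset of $\subtree{=i}{\omega^{k-1}}$ inside the $j$-th block, and these assemble under $\ordPersFunc{X}$ into a large subset of $\subtree{=i}{\omega^k}$.

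The main obstacle is the bookkeeping for the ``moreover'' clause: a single $X$ must interact correctly with \emph{every} $F\in\lrg{\theta}{}{l_i}$ simultaneously across all $i$, forcing us to (i) drive $\placeInFan{\beta_j}\to\infty$ at the top level to absorb the $r$-parameter of an arbitrary $F$, (ii) exploit the recursive identity $F(\theta)^r_{l_i}\cap\subtree{}{\beta_j} = F(\beta_j)^r_{l_i}$ to hand a uniform $r$ to the inductive hypothesis inside each block, and (iii) combine the per-block largeness statements via $\ordPersFunc{X}$ into a single global one, tracking the CB-rank correspondence across the order-isomorphism $X\to\omega^k$.
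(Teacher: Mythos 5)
Your construction follows the paper's proof essentially step for step: induct on $k$, select an $\omega$-sequence at level $l_k$, pass to the subtrees $\subtree{}{\beta_j}$ using the recursive structure of the $F$-sets (the paper routes this through Observation \ref{middleLayerLarge} rather than your identity $F(\theta)^{r}_{l_i}\cap\subtree{}{\beta} = F(\beta)^{r}_{l_i}$, but the two are equivalent), apply the induction hypothesis inside each block, and take the union. Your explicit inclusion of the points $\beta_j$ in $X$ and your attention to the ``moreover'' clause are, if anything, more careful than the paper's write-up.

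There is, however, one concrete flaw: the selection of the top-level sequence. You take $\beta_j\in F(\theta)^{r_0}_{l_k}$ subject only to $\placeInFan{\beta_j}\to\infty$. When $l_k<\CBr{\theta}-1$ this is too weak for the ``moreover'' clause, because membership in $F(\theta)^{r}_{l_k}$ for $r>r_0$ requires $\placeInFan{\beta'}>r$ for \emph{every} $\beta'$ on the $\treeorder$-chain from $\beta_j$ up to $\theta$, not only for $\beta_j$ itself. For instance, with $\theta=\omega^3$ and $l_k=1$, all of your $\beta_j$ could lie in $\subfan{\omega^2}$, where $\placeInFan{\omega^2}=1$; then no $\beta_j$ belongs to $F(\theta)^{1}_{1}$, so $X\cap F(\theta)^{1}_{l_k}=\emptyset$ and the required largeness fails. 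The same failure propagates to the levels $i<k$, since your identity $F(\theta)^r_{l_i}\cap\subtree{}{\beta_j}=F(\beta_j)^r_{l_i}$ is only valid when $\beta_j\in F(\theta)^r_{l_k}$. The fix is exactly the paper's diagonal choice: take $\beta_j\in F(\theta)^{j}_{l_k}$, so that for every $r$ all but finitely many $\beta_j$ lie in $F(\theta)^{r}_{l_k}$; the analogous adjustment is needed in your base case $k=1$. With this change the rest of your argument goes through, and note that the existence part of the statement (a closed copy of $\omega^k$ inside $\bigcup_{n}A_n$) is unaffected by the slip.
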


\begin{proof}
By induction on $k$. Choose some $B=\setcolon{b_r}{r<\omega}$ with $\otp{B}=\omega$ such that $b_r\in F(\theta)^r_{l_k}$. Let
\[
C_i = \setarg{\beta\in \subtree{=l_k}{\theta}}{\subtree{}{\beta}\cap A_i\in\lrg{\beta}{}{l_i}}.
\]
By the above lemma $C_i\in\lrg{\theta}{}{k}$, and so we have that $A_k\cap\bigcap_{i=1}^{k-1} C_i\in\lrg{\theta}{}{l_k}$. Denote this set by $\mathbb{A}_k$. For each $\alpha\in\mathbb{A}_k$, by induction hypothesis, choose some $X_\alpha\subseteq\bigcup_{i=1}^{k-1}(\subtree{}{\alpha}\cap A_i)$ closed in its supremum with $\otp{X_{\alpha}}= \omega^{k-1}$. Then $X = \bigcup_{\alpha\in\mathbb{A}\cap B} X_{\alpha}$ is closed in its supremum with $\otp{X}= \omega^k$.
\end{proof}

\section{Reducing a finite pair-colouring to a canonical colouring}
For this section fix some ordinal $\delta<\omega^\omega$ and some positive integer $n$.
\begin{definition}
Let $\col:\delta\to n$ be some colouring. For $r\in\Nats$, say that $\col$ is \emph{$r$-good} if for every $\theta<\delta$ and $l<\CBr{\theta}$, there is some colour $c<n$ such that 
\[
\setarg{\alpha\in \subtree{=l}{\theta}}{\col(\alpha) = c}\in\lrg{\theta}{r}{l}.
\]
Say that $I\subskel \delta$ is \emph{$r$-good with respect to $\col$}, if $\col\circ\ordPersFunc{I}^{-1}$ is $r$-good.

\end{definition}

\begin{lemma}
	\label{firstLemma}
	Let $\col:\delta \to n$ be some colouring. Then for any $r\in\Nats$ there exists some $I\subskeln{r} \delta$ $r$-good with respect to $\col$.
\end{lemma}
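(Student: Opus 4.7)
The plan is to induct on the structure of $\delta$. Lemma \ref{skeletonIfComponents} lets us decompose both conditions (being an $r$-skeleton and being $r$-good) along the components $\cc{\delta}{i}$ of $\delta$, so it suffices to prove the lemma in the case $\delta = \omega^k$ or $\omega^k+1$, and to induct on $k$. The base case $k=0$ is trivial: every $\theta<\delta$ has $\CBr{\theta}=0$, so the $r$-good condition is vacuous and $I=\delta$ works.

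For the inductive step, treat $\delta=\omega^k+1$ (the case $\delta=\omega^k$ is simpler, needing no top-level pigeonhole). Enumerate $\subfan{\omega^k}=\setcolon{\beta_i}{i<\omega}$, noting $\subtree{}{\beta_i}\cong\omega^{k-1}+1$. Apply the induction hypothesis to each $\subtree{}{\beta_i}$ (with the restricted colouring) to obtain $J_{\beta_i}\subskeln{r}\subtree{}{\beta_i}$ which is $r$-good. The $r$-goodness of $J_{\beta_i}$, applied at the apex $\beta_i$ and each level $l<k-1$, yields a dominant colour $c(\beta_i,l)<n$; setting $c(\beta_i,k-1)=\col(\beta_i)$ we attach a ``colour profile'' in $n^k$ to each $\beta_i$. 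A finite pigeonhole yields an infinite $B\subseteq\setcolon{\beta_i}{i\geq r}$ on which the profile is constantly $(c_0,\dots,c_{k-1})$. Writing $E=\setcolon{\beta_i}{i<r}$, define
\[
I=\set{\omega^k}\cup E\cup B\cup\bigcup_{\beta\in E\cup B}J_\beta.
\]

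That $I\subskeln{r}\delta$ is immediate: $\omega^k\in I$, the first $r$ elements of $\subfan{\omega^k}$ are preserved via $E$, and each $J_\beta$ is an $r$-skeleton of $\subtree{}{\beta}$. For $r$-goodness of $I$ at a $\theta$ inside some $\subtree{}{\beta}$ with $\beta\in E\cup B$, appeal to the IH on $J_\beta$. For $\theta=\omega^k$ and $l=k-1$: the transferred image of $B$ is exactly $F(\omega^k)^r_{k-1}$, and all its points have colour $c_{k-1}$. For $\theta=\omega^k$ and $l<k-1$: the recursive identity $F(\omega^k)^r_l=\bigcup_{\beta\in F(\omega^k)^r_{k-1}}F(\beta)^r_l$, together with the constant profile on $B$ and the IH applied to each $J_\beta$ for $\beta\in B$, shows that the colour $c_l$ is $\lrg{\omega^k}{r}{l}$-dominant.

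The main obstacle is the bookkeeping around the transfer $\ordPersFunc{I}^{-1}$ at the apex: one must reconcile the $\subskeln{r}$ requirement (which forces $E$ into $I$ with uncontrolled colours) with the $r$-good requirement (which only controls the dominant colour on $F(\omega^k)^r_l$). These are compatible precisely because $\lrg{\omega^k}{r}{l}$ ignores the first $r$ elements of each fan, and because $\ordPersFunc{I}$ carries $B$ exactly onto the tail $F(\omega^k)^r_{k-1}$ in the transferred copy, making the level-by-level collapse via the recursive description of $F(\omega^k)^r_l$ possible.
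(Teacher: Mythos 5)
Your proof is correct and follows essentially the same route as the paper's: reduce to a single power of $\omega$ via Lemma \ref{skeletonIfComponents}, induct on the exponent, obtain $r$-good $r$-skeletons of the immediate subtrees by the induction hypothesis, pigeonhole on the resulting colour profiles to find an infinite homogeneous set $B$, and reassemble while keeping the first $r$ fan members so that the $\subskeln{r}$ condition holds. The only cosmetic difference is that you verify $r$-goodness at the apex by unfolding the recursion $F(\omega^k)^r_l=\bigcup_{\beta\in F(\omega^k)^r_{k-1}}F(\beta)^r_l$ explicitly, whereas the paper condenses this step into a citation of Observation \ref{middleLayerLarge}.
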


\begin{proof}
	By Lemma \ref{skeletonIfComponents}, we may assume $\delta = \omega^{k+1}+1$ for some $k\in\Nats$. We prove by induction on $k$.
	
	For $i\in \Nats$, denote $T_i: = \subtree{}{\omega^k\cdot (i+1)}$. Considering $\col\restrictedto T_i$ as a colouring of $\omega^k + 1$, by the induction hypothesis there is some $r$-good $S_i\subskeln{r} T_i$. Since $S_i$ is $r$-good, let $f_i: k+1 \to n$ be the function mapping $m$ to the colour assigned to a large set of $\alpha\in S_i$ with $\CBr{\alpha} = m$. By the pigeonhole principle, there is an infinite subset $B\subseteq\Nats$ such that $f_i = f_j$ for all $i,j\in B$. Let $B' = B\cup \Nats^{<r}$ and take $I = \{\omega^{k+1}\}\cup \bigcup_{i\in B'} S_i$. By construction we have that $I\subskeln{r} \delta$ and $r$-goodness follows by Observation \ref{middleLayerLarge}.
\end{proof}

\begin{definition}
\label{normalDef}
	Let $\col:\pairs{\delta}\to n$ be some colouring.
	Say that $\col$ is \emph{normal} if whenever $\beta_1,\beta_2<\delta$, $\beta_1\treeorder\beta_2$, the value $\col(\set{\beta_1,\beta_2})$ depends only on $\CBr{\beta_1}$, $\CBr{\beta_2}$ and $\cnfcut{\delta}{\beta_2}$. Namely, there is a function $\descolor$, independent of $\beta_1,\beta_2$, such that
		\[
		\col(\set{\beta_1,\beta_2}) = \descolor(\cnfcut{\delta}{\beta_2},\CBr{\beta_2},\CBr{\beta_1}).
		\] 
	Say that $I\subskel \delta$ is \emph{normal} with respect to $\col$ if $\col\circ\ordPersFunc{I}^{-1}$ is normal.
\end{definition}

\begin{notation}
For a pair-colouring $\col:\pairs{X}\to n$ and an element $\alpha\in X$, write $\col_\alpha$ for any colouring $\col_\alpha:X\to n$ assigning $\col_\alpha(\beta) = \col(\set{\alpha,\beta})$ to every $\beta\in X\setminus\set{\alpha}$.
\end{notation}

\begin{lemma}
	\label{descnedentColouringDependsOnCBRankSkeleton}
	Let $\col: \pairs{\delta} \to n$ be some pair-colouring. Then there exists $I\subskel \delta$ normal with respect to $\col$.
\end{lemma}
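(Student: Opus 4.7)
The plan is to reduce to a single CNF-component of $\delta$ via Lemma \ref{skeletonIfComponents}, using that if $\beta_1 \treeorder \beta_2$ then $\cnfcut{\delta}{\beta_1} = \cnfcut{\delta}{\beta_2}$ (an easy check from the definition of $\treeorder$), so every pair relevant to normality lives in a single component. On such a component $\cnfcut{\delta}{\cdot}$ is constant, so normality collapses to ``the colour depends only on the two CB ranks''. I would then induct on the CB rank $k$, treating the typical component as $\omega^k + 1$ (the last component, of shape $\omega^k$, is analogous), with the base case $k = 0$ trivial.

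For the inductive step I would first pin down the colouring at the root. Apply Lemma \ref{firstLemma} to the vertex colouring $\col_{\omega^k}$ to obtain an $r$-good skeleton $I_0 \subskel \omega^k+1$; this produces, for each level $l<k$, a colour $c_l$ and a set $A_l \in \lrg{\omega^k}{r}{l}$ on which $\col_{\omega^k} \equiv c_l$. Feeding the family $\setcolon{A_l}{l<k}$ into Corollary \ref{largeSetsContainSubtree} and using its ``moreover'' clause, I would extract a closed $X \subseteq \bigcup_{l<k} A_l$ of order type $\omega^k$ that is a skeleton of $\omega^k$; then $I_1 := \{\omega^k\} \cup X$ is a skeleton of $\omega^k+1$ on which $\col_{\omega^k}$ is level-constant, taking value $c_l$ at each level $l<k$.

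It remains to normalise pairs sitting strictly below the root. For each $\alpha \in \subfan{\omega^k} \cap I_1$ the set $I_1 \cap \subtree{}{\alpha}$ is a skeleton of $\subtree{}{\alpha} \cong \omega^{k-1}+1$, so the inductive hypothesis produces a normal $S_\alpha \subskel I_1 \cap \subtree{}{\alpha}$ witnessed by some function $\descolor_\alpha : \setcolon{(l_2,l_1)}{l_1<l_2\leq k-1} \to n$. Since only finitely many such functions exist, the pigeonhole principle gives an infinite $B \subseteq \subfan{\omega^k} \cap I_1$ on which $\descolor_\alpha$ is a common function $\descolor^*$. Then $I := \{\omega^k\} \cup \bigcup_{\alpha \in B} S_\alpha$ is a skeleton of $\omega^k+1$ and is normal: the normality function is $\descolor^*$ extended by $\descolor(k,l) = c_l$ for $l<k$.

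The main obstacle will be the extraction of $X$ in the second paragraph: I need Corollary \ref{largeSetsContainSubtree} to give not merely a closed copy of $\omega^k$ inside $\bigcup_{l<k} A_l$ but a genuine skeleton, so that $\col_{\omega^k}$ is forced to be constant on the actual level sets of $I_1$ and not just on a cofinal large subset. The ``moreover'' clause of that corollary is what underwrites this, because the generating sets $F(\omega^k)^r_n$ of the filter $\lrg{\omega^k}{}{n}$ inherit the full $\treeorder$-structure of $\subtree{=n}{\omega^k}$. Once this subtlety is dealt with, the rest is the routine fact that normality is preserved under skeletal refinement, together with the finite pigeonhole over the $\descolor_\alpha$.
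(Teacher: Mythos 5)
Your proposal is essentially the paper's own argument: reduce to a single CNF component via Lemma \ref{skeletonIfComponents}, handle the root's vertex-colouring with Lemma \ref{firstLemma}, recurse into the subtrees $\subtree{}{\alpha}$ for $\alpha$ in the root's fan, and pigeonhole over the finitely many normality functions $\descolor_\alpha$. The one step that does not work as you wrote it is exactly the one you flag as the main obstacle: Corollary \ref{largeSetsContainSubtree} does \emph{not} produce a skeleton. The set $X$ it constructs is a union of closed copies of $\omega^{k-1}$ that omit their own suprema, so $\ordPersFunc{X}$ scrambles Cantor--Bendixson ranks and condition (S3) fails; the ``moreover'' clause only preserves largeness, not the $\treeorder$-structure. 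The repair is the one you gesture at in your last paragraph: since the colour-$c_l$ set at level $l$ contains $\ordPersFunc{I_0}^{-1}[F(\omega^k)^r_l]$, you can take $I_1 = \set{\omega^k}\cup\bigcup_{l<k}\ordPersFunc{I_0}^{-1}[F(\omega^k)^r_l]$ directly — deleting the first $r$ children of every node visibly yields a skeleton. The paper sidesteps the issue entirely by invoking Lemma \ref{firstLemma} with $r=0$: since $F(\theta)^0_l=\subtree{=l}{\theta}$, $0$-goodness already means the vertex colouring is literally constant on every level of every subtree, so no extraction is needed. With that one substitution your proof goes through.
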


\begin{proof}
	By Lemma \ref{skeletonIfComponents}, it suffices to prove for $\delta = \omega^k+1$ for $k\in\Nats$. We thin out $\omega^k + 1$ inductively.
	
	Assume that for every $\alpha$ with $\CBr{\alpha} = m$, there exists a function $f_\alpha$ such that for every $\beta_1,\beta_2\in\subtree{}{\alpha}$ with $\beta_2\treeorder\beta_1$, we have $\col(\set{\beta_1, \beta_2}) = f_\alpha(\CBr{\beta_1},\CBr{\beta_2})$. We replace $\omega^k + 1$ with a skeleton $J$, such that the assumption holds for $m+1$ with respect to $\col\restrictedto\pairs{J}$.
	
	Let $\alpha$ be an ordinal with $\CBr{\alpha} = m+1$ and enumerate $\subfan{\alpha} = \setcolon{\gamma_i}{i<\omega}$. By the pigeonhole principle, we may assume that $f_{\gamma_i}$ is constant with $i$, and denote it by $f$. Consider the colouring $\col_\alpha:\subtree{}{\alpha}\to n$ and choose some $0$-good $I_\alpha\subskel \subtree{}{\alpha}$ as in Lemma \ref{firstLemma}. Let $g:m \to n$ be the function defined $g(i) = \col_\alpha(\beta)$, where $\beta$ is any $\beta\in I_\alpha$ with $\CBr{\beta} = i$. So for every $\beta_1,\beta_2\in I$ with $\beta_2\treeorder\beta_1$ we have
	\[
	\col(\set{\beta_1,\beta_2}) =
	\begin{cases}
	g(\CBr{\beta_2}), &\beta_1 = \alpha
	\\
	f(\CBr{\beta_1},\CBr{\beta_2}), &\beta_1 \neq \alpha
	\end{cases}
	\]
	Replacing $\subtree{}{\alpha}$ with $I_\alpha$ for each $\alpha$ with $\CBr{\alpha} = m+1$ gives us a skeleton as we seek.
	
	Iterating this process until the stage $m=k$ leaves us with $I\subskel\omega^k + 1$ as described in the statement of the Lemma.
\end{proof}

\begin{definition}
\label{goodnessDef}
	Let $\col:\pairs{\delta}\to n$ be some pair-colouring. For $\alpha<\delta$ and $r\in\Nats$, if $\col_\alpha$ is $r$-good, say that $\alpha$ is \emph{$r$-good for $\col$}. Say that $\alpha$ is \emph{good for $\col$} whenever $\alpha$ is $r$-good for $\col$ for some natural $r$.
	\\
	For $I\subskel\delta$, $\alpha\in I$, say that $\alpha$ is \emph{($r$-)good with respect to $I$ for $\col$} if $\ordPersFunc{I}(\alpha)$ is ($r$-)good for $\col_\alpha\circ\ordPersFunc{I}^{-1}$.
\end{definition}
 
\begin{observation}
\label{goodnessGoesToSkeleton}
If $J\subskel I\subskel \delta$ and $\alpha\in J$ is $r$-good with respect to $I$ for $\col$, then $\alpha$ is $r$-good with respect to $J$ for $\col$.
\end{observation}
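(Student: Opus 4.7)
The plan is to unwind the definition of ``$r$-good with respect to $I$'' and reduce the statement to a preservation property of $r$-good $1$-variable colourings under passage to a skeleton.

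Unpacking Definition~\ref{goodnessDef}, the hypothesis amounts to saying that the $1$-variable colouring $\col_\alpha\circ\ordPersFunc{I}^{-1}\colon \otp{I}=\delta\to n$ is $r$-good, while the conclusion asks the same for $\col_\alpha\circ\ordPersFunc{J}^{-1}\colon\otp{J}=\delta\to n$. Let $K=\ordPersFunc{I}[J]$, which is a skeleton of $\delta$ by the very meaning of $J\subskel I$. A direct check on order-preserving bijections (both $\ordPersFunc{I}\circ\ordPersFunc{J}^{-1}$ and $\ordPersFunc{K}^{-1}$ are the unique order-isomorphism $\delta\to K$) yields $\ordPersFunc{J}^{-1}=\ordPersFunc{I}^{-1}\circ\ordPersFunc{K}^{-1}$, so writing $\col'=\col_\alpha\circ\ordPersFunc{I}^{-1}$, the task becomes: if $\col'\colon\delta\to n$ is $r$-good and $K\subskel\delta$, then $\col'\circ\ordPersFunc{K}^{-1}$ is $r$-good as well.

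To verify this reduced statement, I fix $\theta<\delta$ and $l<\CBr{\theta}$, and lift $\theta$ back into $K$ by setting $\tilde\theta:=\ordPersFunc{K}^{-1}(\theta)$. By the observation immediately following Definition~\ref{skeletonDefinition}, $\CBr{\tilde\theta}=\CBr{\theta}$, so $l<\CBr{\tilde\theta}$, and $r$-goodness of $\col'$ at $(\tilde\theta,l)$ produces a colour $c$ with
\[
A:=\setarg{\gamma\in\subtree{=l}{\tilde\theta}}{\col'(\gamma)=c}\in\lrg{\tilde\theta}{r}{l}.
\]
The observation asserting that $\ordPersFunc{I}[F\cap I]\in\lrg{\ordPersFunc{I}(\alpha)}{r}{n}$ whenever $F\in\lrg{\alpha}{r}{n}$, applied with $I=K$, $\alpha=\tilde\theta$, $F=A$, and $n=l$, yields $\ordPersFunc{K}[A\cap K]\in\lrg{\theta}{r}{l}$. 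A quick check using property (S3) together with CB-rank preservation shows $\ordPersFunc{K}[A\cap K]\subseteq \setarg{\beta\in\subtree{=l}{\theta}}{(\col'\circ\ordPersFunc{K}^{-1})(\beta)=c}$, so this latter set belongs to the filter $\lrg{\theta}{r}{l}$ as well, witnessing $r$-goodness of $\col'\circ\ordPersFunc{K}^{-1}$ at $(\theta,l)$.

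The main obstacle is purely bookkeeping: correctly identifying $\ordPersFunc{J}^{-1}$ with the composite $\ordPersFunc{I}^{-1}\circ\ordPersFunc{K}^{-1}$ and tracking in which copy of $\delta$ each quantity lives. The combinatorial content is already packaged in the two observations invoked above, so no new estimates are needed.
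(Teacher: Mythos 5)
Your argument is correct. The paper states this as an unproved observation, so there is no in-text proof to compare against; your write-up is a faithful unwinding of the definitions that supplies the details the author leaves implicit. The key steps are all sound: $K=\ordPersFunc{I}[J]$ is a skeleton of $\delta$ by the defining clause for $J\subskel I$; the identity $\ordPersFunc{J}^{-1}=\ordPersFunc{I}^{-1}\circ\ordPersFunc{K}^{-1}$ follows from uniqueness of the order isomorphism $\delta\to J$; lifting $\theta$ to $\tilde\theta=\ordPersFunc{K}^{-1}(\theta)\in K\subseteq\delta$ preserves CB rank; the observation on large sets (that $\ordPersFunc{K}[F\cap K]\in\lrg{\ordPersFunc{K}(\tilde\theta)}{r}{l}$ for $F\in\lrg{\tilde\theta}{r}{l}$) transports the witnessing large set to $\theta$; and property (S3) with CB-rank preservation shows the transported set sits inside the correct colour class, which lies in $\lrg{\theta}{r}{l}$ because that family is upward closed in $\subtree{=l}{\theta}$. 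This is exactly the routine skeleton-transfer argument that makes the statement an ``observation'' rather than a lemma, and you have executed it without gaps.
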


\begin{lemma}
	\label{goodLemma}
	Let $\col:\pairs{\delta} \to n$ be some pair-colouring. Then there exists some $I\subskel \delta$ such that every $\alpha\in I$ is good with respect to $I$ for $\col$.
\end{lemma}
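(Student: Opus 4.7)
The plan is to enumerate $\delta=\setcolon{\alpha_i}{i<\omega}$ and iteratively apply Lemma \ref{firstLemma} to thin $\delta$ so that each $\alpha_i$ in turn becomes good with respect to the current skeleton. The mechanism that keeps this infinite iteration from collapsing is the fan-preservation machinery of Lemma \ref{goodIncreasingImpliesFanPreserving} and Corollary \ref{fanPreservingCorOmega}, combined with the fact that Lemma \ref{firstLemma} delivers $r$-good $r$-skeletons for arbitrarily large $r$.

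Concretely, set $I_0=\delta$ and recursively define $I_{i+1}$ as follows: apply Lemma \ref{firstLemma} to the single-argument colouring $\col_{\alpha_i}\circ\ordPersFunc{I_i}^{-1}:\otp{I_i}\to n$ with parameter $r=i+1$, obtaining $J_{i+1}\subskeln{i+1}\otp{I_i}$ that is $(i+1)$-good for that colouring, and set $I_{i+1}=\ordPersFunc{I_i}^{-1}[J_{i+1}]$. Unwinding the definitions via the identity $\ordPersFunc{I_{i+1}}=\ordPersFunc{J_{i+1}}\circ(\ordPersFunc{I_i}\restrictedto I_{i+1})$ yields $I_{i+1}\subskeln{i+1} I_i$, and makes the pushforward $\col_{\alpha_i}\circ\ordPersFunc{I_{i+1}}^{-1}$ an $(i+1)$-good colouring of $\otp{I_{i+1}}$.

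Setting $\sigma(i+1)=i+1$ gives a map with finite fibers, so Lemma \ref{goodIncreasingImpliesFanPreserving} ensures $\setcolon{I_i}{i<\omega}$ is a fan-preserving collection of skeletons of $\delta$, and Corollary \ref{fanPreservingCorOmega} then yields that $I=\bigcap_{i<\omega}I_i$ is itself a skeleton of $\delta$. For any $\alpha\in I$, write $\alpha=\alpha_i$; since $\alpha\in I\subseteq I_{i+1}$, the stage-$i$ construction witnesses that $\alpha_i$ is $(i+1)$-good with respect to $I_{i+1}$ for $\col$, and Observation \ref{goodnessGoesToSkeleton} transports this along $I\subskel I_{i+1}$ to yield that $\alpha_i$ is $(i+1)$-good with respect to $I$, and in particular good with respect to $I$.

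The main conceptual point — rather than a genuine obstacle — is realizing that this one-at-a-time strategy does not self-sabotage: each new thinning threatens to destroy the goodness established for earlier $\alpha_j$'s, and the saving fact is precisely Observation \ref{goodnessGoesToSkeleton}, which says goodness is monotone under passage to a sub-skeleton. Beyond that, the only technical care needed is choosing the strengths $r=i+1$ so that the finite-fibers hypothesis of Lemma \ref{goodIncreasingImpliesFanPreserving} is satisfied, which it plainly is.
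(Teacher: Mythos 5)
Your proof is correct and follows essentially the same route as the paper's: iteratively thin via Lemma \ref{firstLemma} to make one point good at a time, keep the intersection a skeleton via Lemma \ref{goodIncreasingImpliesFanPreserving} and Corollary \ref{fanPreservingCorOmega}, and transport goodness down to the final skeleton via Observation \ref{goodnessGoesToSkeleton}. The only cosmetic difference is that you re-enumerate $\delta$ in order type $\omega$ and increase the strength parameter along the enumeration, whereas the paper runs a transfinite recursion of length $\delta$ and uses an injection $\sigma:\delta\to\omega$ to secure finite fibers; these are interchangeable.
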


\begin{proof}
	Fix some injection $\sigma:\delta \to \omega$. We construct inductively $\setcolon{I_{\alpha}}{\alpha<\delta}$ such that
	\begin{enumerate}
	\item
	$I_0=\delta$
	\item
	$I_{\alpha}\subskeln{\sigma(\alpha)} \bigcap_{\beta<\alpha} I_\beta$;
	\item
	$\alpha$ is good with respect to $I_{\alpha}$ for $\col$.
	\end{enumerate}
	By Lemma \ref{goodIncreasingImpliesFanPreserving} and Corollary \ref{fanPreservingCorOmega}, $I = \bigcap_{\alpha<\delta} I_\alpha$ will be a skeleton as we seek.
	
	\smallskip
	Let $\alpha<\delta$ and assume that $I_{\gamma}$ was constructed as above for all $\gamma<\alpha$. Let $J_{\alpha} = \bigcap_{\gamma<\alpha} I_{\gamma}$, a skeleton of $\delta$ by Lemma \ref{goodIncreasingImpliesFanPreserving} and Corollary \ref{fanPreservingCorOmega}. Using Lemma \ref{firstLemma}, let $I_{\alpha}\subskeln{\sigma(\alpha)} J_{\alpha}$ be $\sigma(\alpha)$-good with respect to $\col_\alpha$.
\end{proof}

\begin{definition}
	Let $\col:\pairs{\delta}\to n$ be some pair-colouring. Let $\alpha$ be good for $\col$ and denote $i_\alpha = \cnfcut{\delta}{\alpha}$.
	\begin{itemize}
	\item
	For every $i_\alpha\neq m\leq k_\delta$, let $\epsilon_m = \sup\cc{\delta}{m}$. For each $m\neq i_\alpha$ and $l<\CBr{\epsilon_m}$, define $t_{\alpha}(m,l)$ to be the unique colour for which
	\[
	\setarg{\beta\in \subtree{=l}{\epsilon_m}}{\col(\set{\alpha,\beta}) = t_{\alpha}(m,l)}\in\lrg{\epsilon_m}{}{l}
	\]
	\end{itemize}
	Call $t_{\alpha}$ the \emph{goodness function} of $\alpha$ for $\col$.
\end{definition}

\begin{definition}
	Let $\col:\pairs{\delta}\to n$ be some pair-colouring. We say that $\col$ is \emph{uniformly good} if
	\begin{itemize}
		\item
		Every $\alpha\in I$ is good with respect to $I$ for $\col$.
		\item
		For any $\alpha<\delta$, the goodness function of $\alpha$ depends only on $\cnfcut{\delta}{\alpha}$ and $\CBr{\alpha}$. Namely, there is a function $\domcolor$, independent of $\alpha$, such that
		\begin{gather*}
		t_{\alpha} = \domcolor(\cnfcut{\delta}{\alpha},\CBr{\alpha})
		\end{gather*}
	\end{itemize}
	For $I\subskel \delta$, say that $I$ is \emph{uniformly good with respect to $\col$} if $\col\circ\ordPersFunc{I}^{-1}$ is uniformly good.
\end{definition}

\begin{notation}
	For a function $\domcolor$ as in the above definition we abuse notation and write 	
	\[
	\domsep{i_1}{j_1}{i_2}{j_2} := t_{\alpha}(i_2,j_2)
	\]
	where $\alpha<\delta$ is such that $\cnfcut{\delta}{\alpha} = i_1$ and $\CBr{\alpha}=j_1$.

\end{notation}

\begin{definition}
\label{canonicalDef}
	Let $\col:\pairs{\delta}\to n$ be some pair-colouring. We say that $I\subskel \delta$ is \emph{canonical} with respect to $\col$ if $I$ is normal and uniformly good with respect to $\col$.
	\\
	We say that $\col$ is \emph{canonical}, if $\delta$ itself is canonical with respect to $\col$.
\end{definition}

\begin{prop}
\label{canonicalSkeleton}
	Let $\col:\pairs{\delta} \to n$ be some pair-colouring. Then there exists some $I\subskel \delta$ canonical with respect to $\col$.
\end{prop}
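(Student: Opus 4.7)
The plan is to assemble the three ingredients---normality, goodness of every element, and uniformity of the goodness function---in three successive skeleton refinements, using that each property is preserved under passage to a sub-skeleton.

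First, I would apply Lemma~\ref{descnedentColouringDependsOnCBRankSkeleton} to obtain $I_1\subskel\delta$ normal with respect to $\col$. Next, I would apply Lemma~\ref{goodLemma} to the pulled-back colouring on $I_1$ to obtain $I_2\subskel I_1$ in which every element is good. As observed in the preliminaries, $\CBr$ and $\cnfcut{\delta}{\cdot}$ are invariants of sub-skeleton maps, and large sets restrict to large sets under sub-skeletons; hence normality, goodness, and even the specific goodness function $t_\alpha$ attached to each good $\alpha$ are all preserved under further skeleton refinement. Thus $I_2$ is still normal and all its elements are good, and any subsequent thinning preserves this.

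The remaining task is to uniformize the goodness functions so that $t_\alpha$ depends only on $(\cnfcut{\delta}{\alpha},\CBr{\alpha})$. Since $\delta<\omega^\omega$, there are only finitely many pairs $(i,j)$ and the range of each $t_\alpha$ is a finite set. I would enumerate the classes $(i,j)$ in order of \emph{decreasing} $j$. For a fixed class $(i,j)$, uniformization proceeds by iterated pigeonhole travelling up the tree structure of the component $\cc{\delta}{i}$: first, within each fan of rank-$j$ elements pigeonhole to an infinite subset on which $t_\alpha$ is constant, recording a local value at each rank-$(j+1)$ parent; then at rank $j+1$ pigeonhole each fan of such parents to agree on their local values; and continue upward until a single value is selected at the top of the component. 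Applying this to each of the finitely many classes produces a canonical $I_3\subskel I_2$.

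The main subtle point is verifying that the successive pigeonhole thinnings actually yield sub-skeletons and do not disturb classes handled earlier. This holds because thinning a fan to any infinite subset is a permitted skeleton-level operation, and because processing classes in order of decreasing rank ensures that when uniformizing class $(i,j)$, the higher-rank classes being thinned already have uniform goodness functions, so the uniformity established earlier survives the additional restriction.
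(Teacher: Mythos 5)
Your proof is correct, and every invariance claim you rely on (that $\CBr$, $\cnfcut{\delta}{\cdot}$, large sets, normality, goodness, and the goodness function $t_\alpha$ all pass to sub-skeletons) is sound and has an explicit supporting observation in the paper. But your route differs from the paper's in two ways. First, the order of assembly is reversed: the paper applies Lemma~\ref{goodLemma} first, then uniformizes the goodness functions, and only then applies Lemma~\ref{descnedentColouringDependsOnCBRankSkeleton}, whereas you do normality first and uniformity last. Since each of the three properties is preserved by any further skeleton refinement, either order works; the paper's choice is inessential. Second, and more substantively, the paper does not hand-roll the uniformization of $t_\alpha$. It regards $\alpha\mapsto t_\alpha$ as an ordinary colouring of $\delta$ in finitely many colours and appeals to Lemma~\ref{firstLemma} with $r=0$: a $0$-good vertex colouring is exactly one that is constant on each set $\subtree{=l}{\theta}$, which gives dependence on $(\cnfcut{\delta}{\alpha},\CBr{\alpha})$ directly. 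Your ``travel up the tree pigeonholing fan by fan'' procedure is, in effect, a re-derivation of the proof of Lemma~\ref{firstLemma} specialized to a one-ary colouring; it is correct (each fan is thinned only finitely often, so the end result is a skeleton, and the order of decreasing rank is harmless since constancy survives further thinning), but it reproves an available lemma. The paper's version is therefore more economical, while yours is more self-contained at this step.
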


\begin{proof}
	Let $J\subseteq\delta$ be as guaranteed by Lemma \ref{goodLemma}.
	
	Colour each element of $J$ by its goodness function with respect to $J$ for $\col$. Exercising Lemma \ref{firstLemma}, choose some $J'$ such that $t_{\alpha}$ depends only on $\cnfcut{\delta}{\alpha}$ and $\CBr{\alpha}$. Now use Lemma \ref{descnedentColouringDependsOnCBRankSkeleton} on $J'$ to receive a skeleton $I$.
\end{proof}

For the purpose of calculating bounds on closed ordinal Ramsey numbers, the above proposition allows us to assume every pair-colouring of $\delta$ in finitely many colours is canonical.

\section{Colouring in two colours}
For this section fix some ordinal $\delta<\omega^\omega$.

There is a natural one-to-one correspondence between pair-colourings $\col:\pairs{\delta}\to 2$ and graphs $G=(\delta, E)$, given by the rule $(\alpha,\beta)\in E \iff \col(\set{\alpha,\beta}) = 1$.
We say that a graph on $\delta$ is \emph{canonical} if its corresponding colouring is canonical.

Let $G=(V,E)$ be a graph. For $v\in V$, denote $\neighbours{v} = \setarg{u\in V}{(v,u)\in E}$. For $U\subseteq V$, denote $\neighbours{U} = \bigcup_{v\in U}\neighbours{v}$. We say that a set of vertices $U\subseteq V$ is a \emph{clique} if $\pairs{U}\subseteq E$ and that it is \emph{independent} if $\pairs{U}\cap E = \emptyset$.

When the context is clear, we use graph and colouring terminology interchangeably. When colouring with colours $\set{0,1}$, we will always think of $1$ as the edge colour and of $0$ as the non-edge colour.

\begin{definition}
	Let $G = (\delta,E)$ be some graph on $\delta$. Let $A,B\subseteq \delta$ be infinite, disjoint and without maxima.
	\begin{itemize}
	\item
	Write $A\oppress B$ to mean that for all $X$, if $X\subcof A$, then $B\setminus\neighbours{X}$ is finite.
	\item
	Write $A\harass B$ if $A\oppress B$ and in addition $\neighboursInSet{a}{B}$ is finite for every $a\in A$.
	\end{itemize}
\end{definition}

Note that if $A\oppress B$ then $A_0\oppress B_0$ for every $A_0\subcof A$ and $B_0\subseteq B$.

\begin{lemma}
	\label{harassmentLemma}
	Let $G =(\delta,E)$. Let $A,B\subseteq \delta$ be such that $A\harass B$. Then there is some $A_0\subcof A$ and some $B_0\subseteq B$, cofinite in $B$, such that $\neighboursInSet{b}{A_0}$ is cofinite in $A_0$, for all $b\in B_0$.
\end{lemma}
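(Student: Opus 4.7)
The plan is to construct $A_0$ via a Ramsey-style diagonal argument, and then to show that the alternative would produce a cofinal $X\subseteq A$ contradicting $A\oppress B$.

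First I would pass to a cofinal $\omega$-subsequence of $A$; this is legitimate because $\harass$ is inherited by cofinal subsets of $A$, by the remark following the definition of $\oppress$. So assume $A=\{\alpha_n:n<\omega\}$ and enumerate $B=\{b_m:m<\omega\}$. Inductively build a decreasing chain $A=A^{(0)}\supseteq A^{(1)}\supseteq\cdots$ of cofinal subsets of $A$ together with labels $c_m\in\{0,1\}$: if $\neighboursInSet{b_m}{A^{(m)}}$ is cofinal in $A^{(m)}$, put $A^{(m+1)}:=\neighboursInSet{b_m}{A^{(m)}}$ and $c_m:=1$; otherwise it is bounded (hence finite) in the $\omega$-type set $A^{(m)}$, and $A^{(m+1)}:=A^{(m)}\setminus\neighbours{b_m}$ remains cofinal, with $c_m:=0$. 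A diagonal $a^{(n)}\in A^{(n)}$ chosen strictly increasing and cofinal in $A$ yields $A_0:=\{a^{(n)}:n<\omega\}$ with the property that, for $n\geq m+1$, the element $a^{(n)}$ lies in $\neighbours{b_m}$ precisely when $c_m=1$; this is because $a^{(n)}\in A^{(n)}\subseteq A^{(m+1)}$.

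If $c_m=1$ for cofinitely many $m$, set $B_0:=\{b_m:c_m=1\}$: this is cofinite in $B$ and for every $b_m\in B_0$, $\neighboursInSet{b_m}{A_0}$ contains the cofinite subset $\{a^{(n)}:n\geq m+1\}$ of $A_0$, so we are done. Otherwise $M:=\{m:c_m=0\}$ is infinite, and I aim for a contradiction. For $m\in M$, $F_m:=\neighboursInSet{b_m}{A_0}$ is a finite subset of $\{a^{(n)}:n\leq m\}$; let $S_m:=\{n:a^{(n)}\in F_m\}\subseteq[0,m]$. The harass condition gives column-finiteness: for each fixed $n$, $\{m\in M:n\in S_m\}\subseteq\{m:b_m\in\neighbours{a^{(n)}}\}$ is finite since $\neighboursInSet{a^{(n)}}{B}$ is.

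The hard step is to combine the row bound $S_m\subseteq[0,m]$ with column-finiteness to produce a cofinal sequence $\{n_j:j<\omega\}\subseteq\omega$ and an infinite set $I=\{m'_j:j<\omega\}\subseteq M$ such that $S_m\cap\{n_j:j<\omega\}=\emptyset$ for every $m\in I$. I would build these inductively: at stage $j$, first choose $n_j$ exceeding $j$, $n_{j-1}$, and every element of the finite set $\bigcup_{i<j}S_{m'_i}$; then choose $m'_j\in M$ above $m'_{j-1}$ that avoids the finite set $\bigcup_{l\leq j}\{m\in M:n_l\in S_m\}$. Both selections are always available. Setting $X:=\{a^{(n_j)}:j<\omega\}$, we get a cofinal $X\subseteq A$ with $\neighboursInSet{b_{m'_i}}{X}\subseteq F_{m'_i}\cap X=\emptyset$ for every $i$, so $\{b_{m'_i}:i<\omega\}\subseteq B\setminus\neighbours{X}$ is infinite, contradicting $A\oppress B$. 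The main obstacle is precisely this last construction: the bad $b_m$'s may each still touch $A_0$ in small nonempty sets, and only the joint use of the row bound and column-finiteness lets the cofinal $X$ dodge infinitely many of them simultaneously.
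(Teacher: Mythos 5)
Your proof is correct, but it takes a genuinely different route from the paper. The paper passes to an $\omega$-type cofinal subset of $A$, notes that the sets $\neighboursInSet{a}{B}$ may be taken pairwise distinct (constancy would violate $A\oppress B$), and then applies Ramsey's theorem to the inclusion poset $\setarg{\neighboursInSet{a}{B}}{a\in A}$: an infinite antichain is ruled out by Lemma \ref{finiteSetLemma} (taking every other $a_i$ gives a cofinal $X$ missing infinitely many $b_i$), so there is an infinite increasing chain $A_0$, and $B_0=\neighboursInSet{A_0}{B}$ works since each $b\in B_0$ enters the chain at some point and stays. You instead run a fusion over an enumeration of $B$, sorting each $b_m$ as ``cofinally adjacent'' or ``finitely adjacent'' to the current cofinal set, diagonalize to get $A_0$, and then dispose of the case of infinitely many bad $b_m$ by a two-dimensional thinning that combines the row bound $S_m\subseteq[0,m]$ (from the fusion) with column-finiteness (which, correctly, is exactly where the extra hypothesis of $\harass$ beyond $\oppress$ enters --- just as the finiteness of $\neighboursInSet{a}{B}$ is what powers the antichain case in the paper). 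Your final interleaved choice of $n_j$ and $m'_j$ is sound: $n_j\notin S_{m'_i}$ holds for $i<j$ because $n_j$ is chosen above $\max S_{m'_i}$, and for $i\geq j$ because $m'_i$ avoids the finite column of $n_j$. What your approach buys is self-containedness: it uses neither Ramsey's theorem nor Lemma \ref{finiteSetLemma}; what it costs is that you essentially re-prove inline the combinatorial content that the paper factored out into that lemma, at the price of heavier bookkeeping.
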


\begin{proof}
	We may assume $\otp{A} = \omega$ by thinning out to a cofinal subset. We thin out $A$ so either $\neighboursInSet{a}{B}$ is constant for all $a\in A$, or $\neighboursInSet{a}{B}\neq \neighboursInSet{b}{B}$ for all distinct $a,b\in A$. Since $A\oppress B$, the latter must hold. Consider $F = \setarg{\neighboursInSet{a}{B}}{a\in A}$ as a partially ordered set under inclusion. By Ramsey's theorem, in $(F,\subseteq)$ there exists either an infinite chain or an infinite antichain.
	
	Suppose first that there exists some infinite $\subseteq$-antichain $C\subseteq F$. By Lemma \ref{finiteSetLemma} we choose $\set{a_i}_{i\in\omega}\subseteq A$ and $\set{b_i}_{i\in\omega}\subseteq B$ such that $b_i\in\neighbours{a_i}\setminus\neighbours{\set{a_j}_{j\neq i}}$. Then for $X=\{a_{{2i}}\}_{i\in\omega}$, the set $B\setminus\neighbours{X}$ is infinite, contradicting $A\oppress B$. Thus, there is some infinite $A_0\subseteq A$ such that $\setarg{\neighboursInSet{a}{B}}{a\in A_0}$ is a chain in $(F,\subseteq)$. Let $B_0 = \neighboursInSet{A_0}{B}$, which by $A\oppress B$ is cofinite in $B$. For every $b\in B_0$, since $b$ is an element of the well-ordered ascending union $\bigcup_{a\in A_0}\neighboursInSet{a}{B}$, it must be that $b\in\neighbours{a}$ for cofinitely many $a\in A_0$.
\end{proof}

\subsection{Canonical triangle-free graphs}

We say that a graph $G=(V,E)$ is triangle-free if $\neighbours{v}$ is independent for every $v\in V$. For the remainder of this section, fix some canonical triangle-free $G=(\delta,E)$ with corresponding colouring $\col:\pairs{\delta}\to 2$.

\begin{notation}
For every $i\leq k_\delta$ and $j\leq \CBr{\sup\cc{\delta}{i}}$ denote
\[
\layer{i}{j} = \setarg{\alpha\in\cc{\delta}{i}}{\CBr{\alpha} = j}
\]
\end{notation}

\begin{lemma}
\label{edgeColourScarce}
If there exists no independent $X\subseteq \delta$ closed in its supremum with $\otp{X} = \omega^2$, then the following statements hold
\begin{enumerate}
\item
For fixed $i,j$, there is at most one $l$ such that $\descol{i}{j}{l} = 1$.
\item
For fixed $i_1,i_2,j$, there is at most one $l$ such that $\domsep{i_1}{j}{i_2}{l} = 1$.
\item
For fixed $i_1,i_2,l$, there is at most one $j$ such that $\domsep{i_1}{j}{i_2}{l} = 1$.
\end{enumerate}
\end{lemma}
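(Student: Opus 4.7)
The plan is to derive the forbidden independent closed $\omega^2$-subset in each case by invoking Corollary~\ref{largeSetsContainSubtree}, using that triangle-freeness makes every neighbourhood independent. For (1), I would fix any $\beta$ with $\cnfcut{\delta}{\beta}=i$ and $\CBr{\beta}=j$; by normality the entire layers $\subtree{=l_1}{\beta}$ and $\subtree{=l_2}{\beta}$ lie in $N(\beta)$, and each layer is itself a member of $\lrg{\beta}{0}{l_s}$. Since $l_1<l_2<j=\CBr{\beta}$, Corollary~\ref{largeSetsContainSubtree} applied with $\theta=\beta$ yields a closed $X\subseteq N(\beta)$ of type $\omega^2$; as $N(\beta)$ is independent, $X$ is the forbidden set. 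For (2) I would argue analogously: fix $\alpha$ with $\cnfcut{\delta}{\alpha}=i_1$ and $\CBr{\alpha}=j$; uniform goodness gives large subsets of $N(\alpha)\cap\subtree{=l_s}{\epsilon_{i_2}}$ for $s=1,2$, so Corollary~\ref{largeSetsContainSubtree} with $\theta=\epsilon_{i_2}$ and $l_1<l_2<\CBr{\epsilon_{i_2}}$ closes the argument.

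Case (3) is slightly different because the varying parameter is the source rather than the target layer. Setting $A_s=\subtree{=j_s}{\epsilon_{i_1}}$ for $s=1,2$ and $B=\subtree{=l}{\epsilon_{i_2}}$, my plan is first to show that $A_1\cup A_2$ is independent. For any $\alpha,\alpha'\in A_1\cup A_2$, both $N(\alpha)\cap B$ and $N(\alpha')\cap B$ are large subsets of $B$; if $\alpha$ and $\alpha'$ were adjacent, triangle-freeness would force $N(\alpha)\cap N(\alpha')=\emptyset$, contradicting properness of the filter $\lrg{\epsilon_{i_2}}{}{l}$, under which any two large subsets of $B$ must meet. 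Then Corollary~\ref{largeSetsContainSubtree} with $\theta=\epsilon_{i_1}$ and $j_1<j_2<\CBr{\epsilon_{i_1}}$ produces the desired closed independent set of type $\omega^2$.

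The delicate part I anticipate is the edge case of (3) where $j_2=\CBr{\epsilon_{i_1}}$, so $A_2=\{\epsilon_{i_1}\}$ is a singleton and the strict inequality $l_2<\CBr{\theta}$ required by Corollary~\ref{largeSetsContainSubtree} fails. Here I expect one has to combine the already established non-adjacency of $A_1$ with $\epsilon_{i_1}$ together with statement (1) applied to $\epsilon_{i_1}$ — which bounds how many tree-descendant layers of $\epsilon_{i_1}$ can be adjacent to it — in order to locate the closed independent $\omega^2$-subset inside the tree below $\epsilon_{i_1}$.
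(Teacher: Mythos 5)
Your proposal for all three parts follows the paper's own argument almost verbatim: in each case one uses triangle-freeness (together with the fact that the filter $\lrg{\cdot}{}{\cdot}$ is proper, so any two large sets meet) plus normality or uniform goodness to show a union of two layers is independent, and then Corollary~\ref{largeSetsContainSubtree} yields a closed independent copy of $\omega^2$.

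Concerning the boundary case you flag in part (3): the concern is legitimate, and in fact the paper's own proof also tacitly assumes $j_2<\CBr{\epsilon_{i_1}}$, since it passes $\subtree{=j_1}{\theta_1}$ and $\subtree{=j_2}{\theta_1}$ to Corollary~\ref{largeSetsContainSubtree}, which requires both indices strictly below $\CBr{\theta_1}$; when $j_2=\CBr{\epsilon_{i_1}}$ the second set is just $\set{\theta_1}$ and the union has order type $\omega+1$, not $\omega^2$. However, your sketched repair is not enough as stated: statement (1) applied to $\epsilon_{i_1}$ only controls which layers $\subtree{=l}{\epsilon_{i_1}}$ are uniformly adjacent to $\epsilon_{i_1}$ itself; it says nothing about edges \emph{between} two such layers $\subtree{=l}{\epsilon_{i_1}}$ and $\subtree{=l'}{\epsilon_{i_1}}$, which you would still need to rule out in order to assemble a closed independent $\omega^2$ inside $\subtree{}{\epsilon_{i_1}}$. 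The gap is harmless for the one place the lemma is used (Proposition~\ref{upperBound}), where $j_1<j_2<3=\CBr{\omega^3}$, so the strict inequality holds automatically.
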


\begin{proof}
Proof of (1): 
Assume to the contrary that $l_1<l_2$ are such that $\descol{i}{j}{l_r}=1$. Let $\alpha\in\layer{i}{j}$. Then $\subtree{=l_1}{\alpha}\cup\subtree{=l_2}{\alpha}\subseteq \neighbours{\alpha}$. Thus, the set $\subtree{=l_1}{\alpha}\cup\subtree{=l_2}{\alpha}$ is independent, and by Corollary \ref{largeSetsContainSubtree} contains a copy of $\omega^2$.

Proof of (2):
Assume to the contrary that $l_1<l_2$ are such that $\domsep{i_1}{j}{i_2}{l_r} = 1$. Choose arbitrarily some $\alpha\in\layer{i_1}{j}$ and denote $\theta = \sup\cc{\delta}{i_2}$. Let $F_r = \neighbours{\alpha}\cap\subtree{=l_r}{\theta}$ and note $F_r\in\lrg{\gamma}{}{l_r}$. Then $F_1\cup F_2$ is an independent set, and by Corollary \ref{largeSetsContainSubtree} contains a copy of $\omega^2$.

Proof of (3):
Assume to the contrary that $j_1<j_2$ are such that $\domsep{i_1}{j_r}{i_2}{l}=1$. Denote $\theta_r = \sup\cc{\delta}{i_r}$. For any $\alpha_1,\alpha_2\in \subtree{=j_1}{\theta_1}\cup \subtree{=j_2}{\theta_1}$, by assumption $\neighbours{\alpha_1}\cap \neighbours{\alpha_2}\cap \subtree{=l}{\theta_2}\in\lrg{\theta_2}{}{l}$. So by triangle-freeness $\set{\alpha_1,\alpha_2}\notin E$. Thus, $\subtree{=j_1}{\theta_1}\cup \subtree{=j_2}{\theta_1}$ is an independent set, which contains a copy of $\omega^2$ by Corollary \ref{largeSetsContainSubtree}.
\end{proof}

\begin{lemma}
\label{oppressImpliesHarass}
Let $B\in\lrg{\gamma}{}{n}$ for some $\gamma\leq \delta$. Then there exists some $B_0\subcof B$ such that whenever $A\subseteq\delta$ is such that $A\oppress B$ and $\neighboursInSet{a}{B}\notin\lrg{\gamma}{}{n}$ for each $a\in A$, it is also the case that $A\harass B_0$.
\end{lemma}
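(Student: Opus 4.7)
The plan is to build $B_0$ via a diagonal construction over all potentially problematic elements. Let $A^\ast=\setarg{a\in\delta}{\neighboursInSet{a}{B}\notin\lrg{\gamma}{}{n}}$; since $\delta<\omega^\omega$ is countable, enumerate $A^\ast$ as $\setcolon{a_i}{i<\omega}$. I would build $B_0=\setcolon{b_k}{k<\omega}$ inductively by choosing at stage $k$ some
\[
b_k\in(F(\gamma)^k_n\cap B)\setminus\bigcup_{i<k}\neighbours{a_i}.
\]
Provided such a choice is always available, $B_0$ is cofinal in $B$ because the minimum element of $F(\gamma)^r_n$ tends to $\gamma$ as $r\to\infty$, so $\sup_k b_k=\gamma=\sup B$. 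Moreover, for any $A$ satisfying the hypotheses, every $a\in A$ lies in $A^\ast$, so $a=a_i$ for some $i$, and $\neighboursInSet{a}{B_0}\subseteq \setcolon{b_k}{k\leq i}$ is finite. Combined with $A\oppress B_0$, inherited from $A\oppress B$ via $B_0\subcof B$, this yields $A\harass B_0$.

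The main work is verifying that the inductive step is feasible, i.e., that $(F(\gamma)^k_n\cap B)\setminus\bigcup_{i<k}\neighbours{a_i}$ is nonempty at each stage. For each single $i$, the hypothesis $\neighboursInSet{a_i}{B}\notin\lrg{\gamma}{}{n}$, together with the growth of $\min F(\gamma)^r_n$ toward $\gamma$, implies $B\setminus\neighbours{a_i}$ is cofinal in $B$ and indeed meets every $F(\gamma)^r_n$. The difficulty lies in showing that a finite union of non-large neighborhoods does not cover $F(\gamma)^k_n\cap B$. I would use the canonical structure to analyze $a_i$ by its type $(\cnfcut{\delta}{a_i},\CBr{a_i})$: for types with $\cnfcut{\delta}{a_i}\neq\cnfcut{\delta}{\gamma}$, uniform goodness expressed through $\domsep{\cdot}{\cdot}{\cnfcut{\delta}{\gamma}}{n}$, combined with Observation \ref{middleLayerLarge}, forces the dominant color from that type into $\subtree{=n}{\gamma}$ to be $0$ (otherwise $\neighboursInSet{a_i}{B}$ would contain some $F(\gamma)^r_n$, contradicting the hypothesis). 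So each such $\neighbours{a_i}$ is confined to a non-large ``atypical'' color-$1$ set in $\subtree{=n}{\gamma}$, and Lemma \ref{edgeColourScarce} together with triangle-freeness limit how many such sets can collide onto a common element.

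The main obstacle is the case $\cnfcut{\delta}{a_i}=\cnfcut{\delta}{\gamma}$, where $a_i$ lies in the same component as $\gamma$ and canonicity offers less direct control: normality covers only tree-comparable pairs, leaving tree-incomparable pairs within a component only loosely constrained. For this I would route the successive thinnings through the fan-preserving machinery of Lemma \ref{goodIncreasingImpliesFanPreserving} and Corollary \ref{fanPreservingCorOmega}, arranging the diagonalization as a fan-preserving sequence of $n$-skeletons so that the surviving intersection remains a skeleton and hence meets every $F(\gamma)^r_n$. Triangle-freeness---the fact that any common neighbor $b$ of $a_i$ and $a_j$ forces $\set{a_i,a_j}\notin E$---together with the standing assumption of no closed independent copy of $\omega^2$ bounds how atypical neighborhoods can overlap, which should close the argument.
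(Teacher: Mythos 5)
Your plan is a diagonalization over an enumeration of the potentially problematic points, which differs from the paper's argument: the paper chooses $B_0$ \emph{uniformly} up front (one representative $\beta_r$ from each $F(\gamma)^r_n\cap B$) and then observes that every qualifying $\alpha$ has cofinite non-neighbourhood in $B_0$; no diagonalization against $A$ is needed, because $B_0$ does not depend on $A$ at all. Both routes can work, but yours is more elaborate than necessary and, more importantly, the way you try to justify the inductive step has a genuine gap.

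The gap is conceptual: you have misidentified where the control comes from. You write that ``canonicity offers less direct control'' when $\cnfcut{\delta}{a_i}=\cnfcut{\delta}{\gamma}$, because normality only governs $\treeorder$-comparable pairs and uniform goodness (via $\domcolor$) only governs the component suprema $\epsilon_m$ with $m\neq i_\alpha$. But you have overlooked the first clause of uniform goodness: \emph{every} $\alpha\in\delta$ is good for $\col$, i.e.\ $\col_\alpha$ is $r$-good for some $r$, and $r$-goodness quantifies over \emph{all} $\theta<\delta$ and $l<\CBr{\theta}$, with no restriction by component or tree-comparability. Applied to $\theta=\gamma$ and $l=n$: either $\neighboursInSet{\alpha}{\subtree{=n}{\gamma}}\in\lrg{\gamma}{}{n}$ or $\subtree{=n}{\gamma}\setminus\neighbours{\alpha}\in\lrg{\gamma}{}{n}$. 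For $\alpha\in A^\ast$ the former is excluded by hypothesis (intersecting with $B\supseteq F(\gamma)^m_n$ gives a large neighbourhood in $B$, a contradiction), so the latter holds. This single observation makes every step of your diagonalization trivial (a finite union of non-large neighbourhoods still misses some $F(\gamma)^t_n$, since $\lrg{\gamma}{}{n}$ is closed under finite intersections), and also makes the diagonalization itself unnecessary: the paper's choice of one $\beta_r$ per level already lies outside $\neighbours{\alpha}$ for all $r>s_\alpha$, where $\subtree{=n}{\gamma}\setminus\neighbours{\alpha}\in\lrg{\gamma}{s_\alpha}{n}$.

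As a result, the heavy machinery you reach for---Lemma~\ref{edgeColourScarce}, the fan-preserving Lemma~\ref{goodIncreasingImpliesFanPreserving}, and especially ``the standing assumption of no closed independent copy of $\omega^2$''---is not only unnecessary but partly unavailable: that last assumption is a hypothesis of Lemma~\ref{edgeColourScarce} and a proof-by-contradiction device in Proposition~\ref{upperBound}, not part of the context of Lemma~\ref{oppressImpliesHarass}, which is proved for an arbitrary canonical triangle-free $G$. The case split by $\cnfcut{\delta}{a_i}$ and the appeal to triangle-freeness to ``bound how atypical neighbourhoods can overlap'' are also not developed into an actual argument. So while your construction of $B_0$ is fine and would work, the justification of feasibility is the heart of the lemma, and you did not actually close it; the missing idea is that pointwise goodness on $\theta=\gamma$ already pins down the complementary colour as large.
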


\begin{proof}
For each $r\in \omega$, choose arbitrarily some $\beta_r\in F(\gamma)^r_n$ and let $B_0 = \setcolon{\beta_r}{r<\omega}$. Let $A$ be as in the statement. By canonicity, for any $\alpha\in A$ we have $B\setminus\neighbours{\alpha}\in\lrg{\gamma}{s}{n}$ for some $s$ and therefore $\beta_r\notin \neighbours{\alpha}$ for all $r>s$. Thus, $\neighboursInSet{\alpha}{B_0}$ is finite. As $B_0\subseteq B$, we have that $A \harass B_0$.
\end{proof}

\begin{lemma}
\label{oppressBelowEliminates}
Let $C\subseteq\delta$. Assume there are some $\gamma<\delta$, $l_1<l_2<\CBr{\gamma}$ and $F_i\in\lrg{\gamma}{}{l_i}$ such that $C\oppress F_i$ and $\neighboursInSet{c}{F_2}\notin\lrg{\gamma}{}{l_2}$ for every $c\in C$. Then for every $\theta<\omega^2$, there is some independent, closed in its supremum, $X\subseteq F_1\cup F_2 \subseteq \delta$ with $\otp{X} = \theta$.
\end{lemma}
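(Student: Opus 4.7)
The plan is to extract a well-structured $\omega$-cofinal sequence $B_0\subseteq F_2$ whose elements have only finitely many neighbours in $F_1$, and then build $X$ using landings from $B_0$. First, apply Lemma~\ref{oppressImpliesHarass} to $(C,F_2,l_2)$ to obtain $B_0^*\subcof F_2$ with $C\harass B_0^*$, and then Lemma~\ref{harassmentLemma} to produce $C_0\subcof C$ and a cofinite $B_0\subseteq B_0^*$ such that $\neighboursInSet{b}{C_0}$ is cofinite in $C_0$ for every $b\in B_0$. Triangle-freeness applied to triples $\{a,x,b\}$ with $x\in\neighboursInSet{b}{C_0}$ and $(a,x)\in E$ (available for cofinitely many $a\in F_1$ by $C\oppress F_1$) forces $\neighboursInSet{b}{F_1}$ to be finite for every $b\in B_0$.

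Fix $m$ with $\theta\leq\omega\cdot m$; since closed initial segments of closed sets are closed, it suffices to construct an independent closed $X\subseteq F_1\cup F_2$ of order type $\omega\cdot m$. Use Ramsey's theorem on the triangle-free induced graph on $B_0$ (noting also that the canonical dominant color $\domsep{i_0}{l_2}{i_0}{l_2}=0$, where $i_0=\cnfcut{\delta}{\gamma}$) to pick pairwise non-adjacent landings $b_1<\cdots<b_{m-1}$ in $B_0$. For each $i$ we seek an $\omega$-chunk $A_i$ cofinal in $b_i$, disjoint from the finite sets $\neighboursInSet{b_j}{F_1}$, so that $\bigcup_i A_i\cup\{b_j\}_j$ is pairwise independent. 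The final $X$ will be $A_1\cup\{b_1\}\cup\cdots\cup\{b_{m-1}\}\cup A_m$.

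The main obstacle is cross-chunk independence between $A_i$ and $A_{i'}$: the canonical analysis (using Lemma~\ref{edgeColourScarce} together with triangle-freeness, which gives $\descol{i_0}{l_2}{l_1}=0$ and $\domsep{i_0}{l_1}{i_0}{l_1}=\domsep{i_0}{l_1}{i_0}{l_2}=0$) shows $\neighboursInSet{a}{F_1}$ is merely non-large, not finite, so a naive diagonal avoidance against countably many chunks fails. I would resolve this with a case split. \emph{Case 1:} some $c^*\in C$ satisfies $\neighboursInSet{c^*}{F_1}\in\lrg{\gamma}{}{l_1}$. Then $F_1^*:=\neighboursInSet{c^*}{F_1}$ is large and pairwise independent via triangle-freeness through $c^*$; choosing $A_i\subseteq F_1^*\cap\subtree{}{b_i}\cap(b_{i-1},b_i)$ cofinal in $b_i$ (possible by Observation~\ref{middleLayerLarge} once the $b_i$ are picked deep enough in $B_0$, i.e.\ with large $\placeInFan{b_i}$) yields the required cross-chunk independence automatically. \emph{Case 2:} $\neighboursInSet{c}{F_1}\notin\lrg{\gamma}{}{l_1}$ for every $c\in C$. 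Apply Lemma~\ref{oppressImpliesHarass} symmetrically to $(C,F_1,l_1)$ and refine via Lemma~\ref{harassmentLemma} to a set $B_0^{(1)}\subseteq F_1$ in which any two elements share a common neighbour in $C_0^{(1)}$; triangle-freeness then makes $B_0^{(1)}$ pairwise independent, and the chunks $A_i$ can be drawn from $B_0^{(1)}$. The technical subtlety in this case is coordinating the harassment construction of $B_0^{(1)}$ with the choice of the $b_i$'s so that $B_0^{(1)}\cap\subtree{}{b_i}$ is cofinal in each $b_i$, which I expect to be the hardest part of the proof.
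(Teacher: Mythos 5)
Your frame — harassing $F_2$ via Lemmas \ref{oppressImpliesHarass} and \ref{harassmentLemma} to get independent landings $b_i$, observing that $\neighboursInSet{b}{F_1}$ is finite for $b\in B_0$, and hanging $\omega$-chunks from $F_1$ cofinally below each $b_i$ — matches the paper's, and your Case 1 is essentially sound. The gap is in Case 2, and it is not a deferrable technicality but a genuine obstruction. Lemma \ref{oppressImpliesHarass} applied globally to $F_1\in\lrg{\gamma}{}{l_1}$ produces $B_0^{(1)}=\{\beta_r\}_{r<\omega}$ with $\beta_r\in F(\gamma)^r_{l_1}$; but for any fixed $b\in\subtree{=l_2}{\gamma}$ one has $F(\gamma)^r_{l_1}\cap\subtree{}{b}=\emptyset$ as soon as $r$ exceeds the $\placeInFan{\cdot}$-values on the path from $b$ up to $\gamma$, so $B_0^{(1)}\cap\subtree{}{b_i}$ is \emph{finite} for every $i$. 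No chunk cofinal in $b_i$ can be drawn from $B_0^{(1)}$, no matter how the construction is coordinated. The only viable move is to apply Lemma \ref{oppressImpliesHarass} locally, with $B=\subtree{=l_1}{b_i}\cap F_1\in\lrg{b_i}{}{l_1}$; but its hypothesis then reads $\neighbours{c}\cap\subtree{=l_1}{b_i}\notin\lrg{b_i}{}{l_1}$ for every $c\in C$, and your global Case 2 assumption ($\neighboursInSet{c}{F_1}\notin\lrg{\gamma}{}{l_1}$) does not imply this local condition: a given $c$ can have a locally large neighbourhood in many individual subtrees $\subtree{}{b_i}$ without its neighbourhood being large in $\gamma$, and which subtrees these are varies with $c$. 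So your dichotomy is the wrong one.

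The paper's proof supplies exactly the missing machinery: fix $2n+1$ landings $\beta_0,\dots,\beta_{2n}$ first, set $I_c=\{i:\neighbours{c}\cap\subtree{=l_1}{\beta_i}\in\lrg{\beta_i}{}{l_1}\}$, stabilize $I_c=I$ by pigeonhole over the finite index set, and split on $|I|>n$ versus $|I|\leq n$. In the first case a single $c$ supplies $n+1$ locally large, pairwise independent chunks (your Case 1, localized); in the second, $n+1$ subtrees satisfy the local non-largeness needed to harass inside each of them, and Lemma \ref{harassmentLemma} is iterated finitely many times to produce one cofinal $C_n\subseteq C$ whose elements serve as common neighbours witnessing independence of the union of the local sets $B^i_0$ together with the $\beta_i$. (A further minor point: $\domsep{i_0}{l_2}{i_0}{l_2}$ is not defined, since goodness functions $t_\alpha(m,l)$ are only defined for $m\neq\cnfcut{\delta}{\alpha}$; independence of the landings should instead come from their cofinite common neighbourhoods in $C_0$.)
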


\begin{proof}
By Observation \ref{middleLayerLarge}, we may thin out $F_2$ so that $\subtree{=l_1}{\beta}\cap F_1\in \lrg{\beta}{}{l_1}$, for all $\beta\in F_2$.
By thinning out $C$ to a cofinal subset, we may assume $\otp{C} = \omega$. By Lemma \ref{oppressImpliesHarass} there is some $A_2\subcof F_2$ with $C\harass A_2$. By Lemma \ref{harassmentLemma}, we may assume that $\neighboursInSet{\beta}{C}$ is cofinite in $C$ for each $\beta\in A_2$. Without loss of generality, assume $\otp{A_2}=\omega$ and enumerate $A_2 = \setcolon{\beta_i}{i<\omega}$.

Let $n\in\omega$ be arbitrary, we find in $\delta$ an independent copy of $\omega\cdot (n+1)$. The set $C\cap \bigcap_{i=0}^{2n} \neighbours{\beta_i}$ is cofinite in $C$, so without loss of generality equal to $C$. For each $c\in C$, let $I_c =\setarg{i<2n+1}{\neighbours{c}\cap \subtree{=l_1}{\beta_i}\in \lrg{\beta_i}{}{l_1}}$. By the pigeonhole principle we may assume $I:=I_c$ is constant for all $c\in C$.

If $|I|> n$, choose some $\alpha\in C$, and for each $i\in I$ some $X_i\subcof \neighbours{\alpha}\cap \subtree{=l_1}{\beta_i}\cap F_1$ with $\otp{X_i} = \omega$. Conclude that $\bigcup_{i\in I} (X_i\cup\set{\beta_i})$ contains an independent copy of $\omega\cdot (n+1)$.

Then assume $|I| \leq n$, and without loss of generality $\set{0,\dots,n}\cap I = \emptyset$. For each $i\leq n$, let $B^i_0\subcof \subtree{=l_1}{\beta_i}\cap F_1$ be as guaranteed by Lemma \ref{oppressImpliesHarass}. As $C\oppress \subtree{=l_1}{\beta_j}\cap F_1$ for all $j<\omega$, for $i\notin I$ we have $C\harass B^i_0$. Iterating Lemma \ref{harassmentLemma}, construct a descending chain $C \supseteq C_0 \supseteq C_1 \dots \supseteq C_n$ of infinite sets such that $\neighboursInSet{\alpha}{C_i}$ is cofinite in $C_i$ for cofinitely many $\alpha\in B^i_0$. Without loss of generality, for every $\alpha\in\bigcup_{i\leq n} B^i_0$ the set $\neighboursInSet{\alpha}{C_n}$ is cofinite in $C_n$. Then for any $\alpha_1,\alpha_2\in \bigcup_{i=0}^n (B^i_0\cup\set{\beta_i})$, the sets $\neighbours{\alpha_1}$,$\neighbours{\alpha_2}$ intersect in a cofinite subset of $C_n$. So $\bigcup_{i=0}^n (B^i_0\cup\set{\beta_i})$ is an independent set containing a copy of $\omega\cdot(n+1)$.

Since $n$ was chosen arbitrarily, we find an independent copy of any $\theta <\omega^2$.
\end{proof}

\section{Proof of the theorem}
\label{Theorem}

For this section, it is helpful to use Appendix \ref{figure}. Fix $\delta = \omega^3\cdot 2$ and write $\layer{i}{j} = \setarg{\alpha\in\cc{\delta}{i}}{\CBr{\alpha} = j}$.

\begin{prop}
\label{upperBound}
$\closedramseynumber{\omega\cdot 2}{3} \leq \omega^3\cdot 2$
\end{prop}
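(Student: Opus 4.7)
The plan is to invoke Proposition \ref{canonicalSkeleton} and reduce to the case of an arbitrary canonical triangle-free graph $G=(\delta,E)$ on $\delta=\omega^3\cdot 2$, in which I need to locate an independent set of order type $\omega\cdot 2$ closed in its supremum. Assume for contradiction no such set exists. Since the initial segment of length $\omega\cdot 2$ of any closed-in-sup independent copy of $\omega^2$ is itself a closed-in-sup independent copy of $\omega\cdot 2$, our assumption also rules out closed-in-sup independent $\omega^2$'s, so Lemma \ref{edgeColourScarce} applies: both $\descolor$ and $\domcolor$ take the value $1$ only on a very sparse grid of parameters. Combined with triangle-freeness, this means that for each ``rank-type'' $(i,j)$ of a vertex $\alpha$, the edges incident to $\alpha$ are confined to essentially one descendant CB rank in $\cc{\delta}{i}$ and, for each $i'\neq i$, one cross-component CB rank in $\cc{\delta}{i'}$.

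Next, I would pick a candidate limit point $\alpha$ --- the natural choices being $\omega^3$ (the top of $\cc{\delta}{1}$) and the elements of $\layer{2}{1}$ or $\layer{2}{2}$, whose descendants are cofinal in them --- and try to build an independent closed-in-sup $X\subseteq\delta$ of order type $\omega\cdot 2$ with $\alpha=\sup(X\cap\alpha)\in X$. The construction requires
\begin{itemize}
\item an $\omega$-sequence of descendants of $\alpha$, cofinal in $\alpha$, pairwise non-adjacent and each non-adjacent to $\alpha$;
\item followed by an $\omega$-sequence of elements of $\delta$ above $\alpha$, pairwise non-adjacent, each non-adjacent to $\alpha$ and to the first sequence.
\end{itemize}
By Lemma \ref{edgeColourScarce}(1), the adjacent descendants of $\alpha$ lie at a single CB rank, so every other descendant layer is $\lrg{\alpha}{}{\cdot}$-large inside $\neighbours{\alpha}^c$ and yields the cofinal $\omega$-sequence via Corollary \ref{largeSetsContainSubtree}. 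Parts (2) and (3) of Lemma \ref{edgeColourScarce} similarly leave a $\lrg{\cdot}{}{\cdot}$-large layer of potential successors of $\alpha$, from which the trailing $\omega$-sequence is extracted. Triangle-freeness, applied to $\alpha$ and a descendant, then forces the two sequences to be mostly mutually non-adjacent, and residual cross-adjacencies are cleaned up by passing to further large subsets.

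The main obstacle is to argue that \emph{some} candidate $\alpha$ succeeds. If every candidate fails, the failure at each $\alpha$ must come from a layer hosting too many neighbours of $\alpha$; feeding these failures back to Lemma \ref{oppressBelowEliminates}, with $C$ a cofinal collection of failing vertices and $F_1,F_2$ two layers of different CB rank realising the obstruction, should produce a closed-in-sup independent copy of any $\theta<\omega^2$ and ultimately of $\omega^2$ itself, contradicting the standing assumption. The delicate bookkeeping lies in the interplay between the descendant restrictions from $\descolor$ and the cross-component restrictions from $\domcolor$, together with triangle-freeness, to ensure that the finitely many ``bad'' CB ranks identified by Lemma \ref{edgeColourScarce} cannot simultaneously block the construction at every candidate limit point.
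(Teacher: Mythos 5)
The proposal correctly identifies the tools the paper uses (canonicity, Lemma \ref{edgeColourScarce}, Lemma \ref{oppressBelowEliminates}, and a direct construction of $X_1\cup\{\tau\}\cup X_2$), but the central step of the argument is left vague and, where concrete, mischaracterized, so there is a genuine gap.

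The paper's proof only ever considers the single candidate limit point $\tau=\omega^3$. Applying items (1) and (2) of Lemma \ref{edgeColourScarce} to $\omega^3$ produces two CB ranks $j_1<j_2$ in $\cc{\delta}{1}$ and two CB ranks $l_1<l_2$ in $\cc{\delta}{2}$ such that, after thinning to a skeleton, $\omega^3\notin\neighbours{\layer{1}{j_t}\cup\layer{2}{l_s}}$ for all $t,s$. The dichotomy then is purely about the $\oppress$ relation \emph{between} $\layer{1}{j_t}$ and $\layer{2}{l_s}$: if some $\layer{1}{j_t}\mathrel{\not\oppress}\layer{2}{l_s}$ the direct construction through $\omega^3$ works, and if all four hold, then Lemma \ref{oppressBelowEliminates} applied with $C=\layer{1}{j_t}$ forces $\domsep{1}{j_1}{2}{l_2}=\domsep{1}{j_2}{2}{l_2}=1$, which contradicts item (3) of Lemma \ref{edgeColourScarce}. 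Your description of the obstruction as ``a layer hosting too many neighbours of $\alpha$'' and your choice of $C$ as ``a cofinal collection of failing vertices'' both miss this: the layers are by construction entirely disjoint from $\neighbours{\omega^3}$, and $C$ is a layer \emph{below} $\omega^3$ (a candidate first $\omega$-sequence), not a set of candidate limit points. Without this precise case split and the final appeal to item (3), the claim that ``if every candidate fails'' a contradiction follows is not justified; nothing you have written shows why the hypotheses of Lemma \ref{oppressBelowEliminates} would be satisfied when the construction at $\omega^3$ fails, and considering additional candidates in $\layer{2}{1}$ or $\layer{2}{2}$ is an unnecessary detour that the paper never needs. (A smaller inaccuracy: Lemma \ref{oppressBelowEliminates} yields independent closed-in-supremum copies of every $\theta<\omega^2$ but not of $\omega^2$ itself; one uses the copy of $\omega\cdot 2$ directly.)
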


\begin{proof}
Assume that the statement is false. Let $\col:\pairs{\delta}\to 2$ contradict the statement and let $G = (\delta, E)$ be the triangle-free graph corresponding to $\col$. By Proposition \ref{canonicalSkeleton} we may assume that $\col$ is canonical.

By item $1$ of Lemma \ref{edgeColourScarce}, there are $j_1 < j_2 < 3$ such that $\omega^3\notin \neighbours{\layer{1}{j_1}\cup\layer{1}{j_2}}$. By item $2$, there are $l_1 < l_2 < 3$ such that $\domsep{1}{3}{2}{l_1} = \domsep{1}{3}{2}{l_2} = 0$, and by thinning out to a skeleton we may assume $\omega^3\notin\neighbours{\layer{2}{l_1}\cup\layer{2}{l_2}}$.

Assume for a moment that $\layer{1}{j_t}\oppress\layer{2}{l_s}$ for every $t,s\in\set{1,2}$. By Lemma \ref{oppressBelowEliminates} it is impossible that $\domsep{1}{j_t}{2}{l_2} = 0$. So $\domsep{1}{j_1}{2}{l_2} = \domsep{1}{j_2}{2}{l_2} = 1$ in contradiction to item $3$ of Lemma $\ref{edgeColourScarce}$.

Let, then, $t,s\in\set{1,2}$ be such that $\layer{1}{j_t}\mathrel{\not\oppress}\layer{2}{l_s}$. Let $X_1\subcof\layer{1}{j_t}$ be such that $X_2:=\layer{2}{l_s}\setminus\neighbours{X_1}$ is infinite. By Ramsey's theorem and triangle-freeness, each $X_i$ contains an infinite independent set. We conclude that $X_1\cup\set{\omega^3}\cup X_2$ contains an independent copy of $\omega\cdot 2$ closed in its supremum, contradicting our assumption.
\end{proof}

\begin{prop}
\label{lowerBound}
	$\closedramseynumber{\omega\cdot 2}{3} \geq \omega^3\cdot 2$
\end{prop}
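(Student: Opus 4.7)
The plan is to exhibit a single triangle-free graph $G = (\omega^3\cdot 2, E)$---equivalently, a colouring $\col:\pairs{\omega^3\cdot 2}\to 2$ with no colour-$1$ triangle---such that for every $\theta<\omega^3\cdot 2$ the restricted colouring $\col\restrictedto\pairs{\theta}$ contains no independent set of order type $\omega\cdot 2$ closed in its supremum. Since every closed-in-sup $X\subseteq\omega^3\cdot 2$ of type $\omega\cdot 2$ has $\sup X<\omega^3\cdot 2$, this reduces to building a triangle-free $G$ whose only independent closed-in-sup copies of $\omega\cdot 2$ are cofinal in $\omega^3\cdot 2$---the configuration left open by Proposition~\ref{upperBound}.

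I would use the decomposition $\omega^3\cdot 2 = \cc{\delta}{1}\sqcup\cc{\delta}{2}$, where $\cc{\delta}{1}=[0,\omega^3]$ contains the unique rank-$3$ point $\omega^3$ and $\cc{\delta}{2}=(\omega^3,\omega^3\cdot 2)$ has no rank-$3$ point. The graph is defined by a canonical-style recipe: within each component, place colour-$1$ edges on $\treeorder$-related pairs $\{\alpha,\beta\}$ according to a rule $\descol{i}{j}{l}$ designed to block in-component limit configurations; across the two components, connect pairs by a rule $\domsep{i_1}{j_1}{i_2}{j_2}$ designed to block cross-component configurations. The rank-$3$ vertex $\omega^3$ plays a pivotal role in handling the case where a purported $\eta_1$ equals $\omega^3$.

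The verification is twofold. \emph{Triangle-freeness} is checked by a case split on whether the three edges of a would-be triangle lie within one component (where the edges are $\descolor$-edges and only triples along tree-chains are candidates; uniqueness of tree-parents plus a careful spacing of edge-levels rules them out) or cross components (where each $\domsep$-edge forces specific CB-rank constraints on its endpoints, and consistency of these with the within-component edges forces a non-edge). The \emph{blocking property}, for $X\subseteq\theta<\omega^3\cdot 2$ of type $\omega\cdot 2$ with closed sup and limits $\eta_1<\eta_2$, uses a case analysis on $(\cnfcut{\delta}{\eta_k},\CBr{\eta_k})_{k=1,2}$: either both limits sit in one component, where the within-component edges force an edge (since convergence to $\eta_k$ entails a tail inside $\subfan{\eta_k}$ meeting a $\descolor$-edge with $\eta_k$), or they straddle the components, where the $\domsep$-edges supply an edge between tails converging to the two limits.

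The main obstacle is threading the needle between triangle-freeness and the blocking property. Each added cross-component edge family risks completing a triangle with a within-component tree-adjacency, and each additional within-component rule risks tree-chain triangles of the sort the analysis in Lemma~\ref{edgeColourScarce} would flag. The exact sharpness of the bound $\omega^3\cdot 2$ stems from the fact that two components, each of depth $3$, supply exactly enough CB-rank layers to accommodate a canonical edge assignment meeting both requirements; cutting off any initial segment either reduces the number of layers available or truncates a layer, leaving some independent configuration unblocked, which is what produces the strict lower bound $\closedramseynumber{\omega\cdot 2}{3}>\theta$ for each $\theta<\omega^3\cdot 2$.
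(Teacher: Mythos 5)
Your high-level plan coincides with the paper's: exhibit one explicit triangle-free graph on $\omega^3\cdot 2$ all of whose independent closed-in-sup copies of $\omega\cdot 2$ are cofinal, and then restrict to each $\theta<\omega^3\cdot 2$. However, the sketch contains a genuine gap in two places.

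First, your description of the construction is too narrow. You propose that \emph{within} a component the colour-$1$ edges be placed only on $\treeorder$-related pairs according to a $\descolor$-rule. The paper's graph has, in addition to the immediate-tree-parent edges $\descol{i}{j+1}{j}=1$, several essential within-component edge families between pairs that are \emph{not} $\treeorder$-comparable --- e.g.\ edges of $\edges{1,0}{1,1}$ with $0<k'-k<l$, or of $\edges{1,2}{1,0}$ with $k\le k'$ --- whose purpose is precisely to realize the $\oppress$ relation between layers while keeping $\descolor$ and $\domcolor$ at value $0$. A canonical-looking colouring determined purely by CB ranks and $\cnfcut$ data cannot do this: the uniform goodness constraints force $\col_\alpha$ to be constant on large sets, but the oppression edges live exactly on the small-complement tails, which is where all the freedom is. Omitting these families, the blocking argument simply does not go through.

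Second, your verification of the ``both limits in one component'' case rests on the claim that convergence of $A$ to $\eta_1$ entails a tail of $A$ inside $\subfan{\eta_1}$, hence a $\descolor$-edge to $\eta_1$. This is false. A sequence converging to $\tau$ from below has a tail inside $\subtree{}{\tau}$, and after pigeonholing one may assume it lies in $\subtree{=j_A}{\tau}$ for a \emph{single} $j_A<\CBr{\tau}$, but $j_A$ may be any value below $\CBr{\tau}$, not just $\CBr{\tau}-1$. (E.g., $\{\omega\cdot k+1\}_{k<\omega}\to\omega^2$ lies entirely at CB rank $0$.) When $j_A<\CBr{\tau}-1$, the immediate-parent $\descolor$-edges never touch $\eta_1$, and those are the cases where the paper invokes the oppression edges and Lemma~\ref{harassmentLemma} to force an edge into $B$; your sketch has no mechanism for them. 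This is the core subtlety of the lower bound, and it is exactly what makes the construction delicate rather than an automatic consequence of canonicity.
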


\begin{proof}
	We define a (canonical) triangle-free graph $G = (\delta,E)$ such that the induced subgraph on any $\theta<\delta$ does not contain an independent closed copy of $\omega\cdot 2$. Appendix \ref{colouringFigure} is a schematic representation of $G$. The dashed and dotted lines in the figure imply the $\oppress$ relation between the circled set more to the left-hand side and the circled set (or sets) more to the right-hand side.
	
	We use the notation 
	\[
	\edges{i,j}{k,l}:=\setarg{(\beta,\alpha)\in E}{\beta<\alpha, \beta\in\layer{i}{j}, \alpha\in\layer{k}{l}}.
	\]
	We define $E$ so that $\descol{i}{j+1}{j} = 1$ for all $i,j$.
	In our notation,
	\[
	\edges{i,j}{i,j+1} \supseteq \setarg{(\beta,\alpha)\in \layer{i}{j}\times\layer{i}{j+1}}{\beta\treedescendant\alpha}
	\]
	We list the sets of edges that imply $\domsep{i_1}{j_1}{i_2}{j_2} = 1$:
	\begin{align*}
	\edges{1,3}{2,0} &= \set{\omega^3}\times \layer{2}{0}
	\\
	\edges{1,2}{1,0} &= \setarg{(\omega^2\cdot k,~\omega^2\cdot k' +\omega\cdot l' +m')}{0<k\leq k'}
	\\
	\edges{1,0}{1,2} &= \setarg{(\omega^2\cdot k + \omega\cdot l + m,~\omega^2\cdot k')}{(k+1) + l < k'}
	\\
	\edges{1,0}{2,1} &= \setarg{(\omega^2\cdot k + \omega\cdot l + m,~ \omega^3 + \omega^2\cdot k' +\omega\cdot l')}{k+l < l'}
	\\
	\edges{2,2}{2,0} &=  \setarg{(\omega^3 + \omega^2\cdot k,~\omega^3 + \omega^2\cdot k' +\omega\cdot l' +m')}{k\leq k'}
	\end{align*}
	Lastly, we list sets of edges that imply $\oppress$ while $\domsep{i_1}{j_1}{i_2}{j_2} = 0$:
	\begin{align*}
	\edges{1,1}{2,1} = &\setarg{(\omega^2\cdot k + \omega\cdot l,~ \omega^3 +\omega^2\cdot k' + \omega\cdot l')}{l'<k}
	\\
	\edges{1,0}{1,1} \supseteq &\setarg{(\omega^2\cdot k + \omega\cdot l + m,~\omega^2\cdot k' + \omega\cdot l')}{0< k'- k < l, l'<l}
	\\
	\edges{1,0}{2,0} = &\setarg{(\omega^2\cdot k + \omega\cdot l + m,~ \omega^3 + \omega^2\cdot k' + \omega\cdot l' + m')}{l'<l}
	\\
	\edges{2,0}{2,1} \supseteq &\setarg{(\omega^3 + \omega^2\cdot k + \omega\cdot l + m,~ \omega^3 + \omega^2\cdot k' + \omega\cdot l')}{k < k', l' < l}
	\end{align*}
	
	In the last list, for $\edges{1,0}{1,1}$ and $\edges{2,0}{2,1}$ we do not use the equality sign in order to accommodate edges whose existence was declared previously. We let $E$ be minimal under inclusion and fulfilling our stated requirements.
	
	\medskip
	\noindent\textbf{Claim 1.} $G = (\delta,E)$ is triangle-free
	\begin{proof}
		Note that $\layer{i}{j}$ is independent for all $i,j$.
		
		Assume $\set{\omega^3,\alpha,\beta}$ is a triangle with $\alpha<\beta$. Then it must be that $\alpha\in \layer{1}{2}$ and $\beta\in\layer{2}{0}$. But then, it cannot be that $\set{\alpha,\beta}\in E$. So $\omega^3$ takes no part in a triangle.
		
		Consider $\gamma\in\layer{1}{2}$. Assume $T = \set{\alpha,\beta,\gamma}$ is a triangle with $\alpha<\beta$. Looking at $\neighbours{\layer{1}{2}}$, it must be that $T\subseteq\cc{\delta}{1}$. Since $\omega^3\notin T$ it must be that $\set{\CBr{\alpha},\CBr{\beta}} = \set{0,1}$. Since $\edges{1,1}{1,0} = \emptyset$, we have $\CBr{\alpha} = 0$ and $\CBr{\beta} = 1$. Since $\edges{1,2}{1,1} = \emptyset$ we have that $(\beta,\gamma)\in \edges{1,1}{1,2}$, implying $\beta\treeorder\gamma$ and $\alpha<\beta<\gamma$. From $(\alpha,\gamma)\in \edges{1,0}{1,2}$ we get $\alpha \not\treeorder\gamma$ and so $\alpha\not\treeorder\beta$. So
		\begin{align*}
		\alpha &= \omega^2\cdot k + \omega\cdot l + m
		\\
		\beta &= \omega^2\cdot (k'-1) + \omega\cdot l'
		\\
		\gamma &= \omega^2\cdot k'
		\end{align*}
		with the restrictions $(k'-1) - k < l$ and $(k+1) + l < k'$ coming from $(\alpha,\beta)\in \edges{1,0}{1,1}$ with $\alpha\not\treeorder\beta$ and $(\alpha,\gamma)\in\edges{1,0}{1,2}$ respectively. These two inequalities are contradictory, so $\gamma$ cannot take part in a triangle.
		
		Consider $\beta\in\layer{1}{1}$. Assume $\set{\alpha,\beta,\gamma}$ is a triangle with $\alpha<\gamma$. We already have enough to guarantee that there are no triangles within $\cc{\delta}{1}$. So $\gamma\in\cc{\delta}{2}$ and in particular $(\beta,\gamma)\in\edges{1,1}{2,1}$. This leaves no other option but $(\alpha,\beta)\in \edges{1,0}{1,1}$.
		So
		\begin{align*}
		\alpha &= \omega^2\cdot k + \omega\cdot l + m
		\\
		\beta &= \omega^2\cdot k' + \omega\cdot l'
		\\
		\gamma &= \omega^3 + \omega^2\cdot k'' + \omega\cdot l''
		\end{align*}
		with the restrictions $k+l<l''$ and $l''<k'$ coming from $(\alpha,\gamma)\in\edges{1,0}{2,1}$ and $(\beta,\gamma)\in\edges{1,1}{2,1}$ respectively, which gives $k+l<k'$. As $k'\neq k+1$, it cannot be that $\alpha\treedescendant\beta$. Thus, we have the additional restriction $k'-k<l$ from $(\alpha,\beta)\in\edges{1,0}{1,1}$. This again is contradictory, and so $\beta$ takes no part in a triangle.
		
		Consider $\beta\in\layer{2}{2}$. Assume $\set{\alpha,\beta,\gamma}$ is a triangle with $\alpha<\gamma$. It must be that $\alpha<\beta<\gamma$, $\alpha\treedescendant\beta$ and $\gamma\in\layer{2}{0}$. But $\edges{2,1}{2,0} = \emptyset$, and so $(\alpha,\gamma)\in E$ cannot happen, contrary to our assumption.
		
		Finally, assume $\set{\alpha,\beta,\gamma}$ is a triangle with $\alpha<\beta<\gamma$. The only possibility left is $\alpha\in \layer{1}{0}$, $\beta\in\layer{2}{0}$, $\gamma\in\layer{2}{1}$. So
		\begin{align*}
		\alpha &= \omega^2\cdot k + \omega\cdot l + m
		\\
		\beta &= \omega^3 + \omega^2\cdot k' + \omega\cdot l' + m'
		\\
		\gamma &= \omega^3 + \omega^2\cdot k'' + \omega\cdot l''
		\end{align*}
		with $l'<l$ coming from $(\alpha,\beta)\in\edges{1,0}{2,0}$ and $k+l <l''$ coming from $(\alpha,\gamma)\in\edges{1,0}{2,1}$. If $\beta\treedescendant\gamma$, then $l'' = l'+1$, leading to $k+l<l'+1<l+1$ which is impossible. So this is not the case, and we get the extra restriction $l''<l'$ from $(\beta,\gamma)\in\edges{2,0}{2,1}$. This gives us $k+l<l'<l$ in contradiction. All possibilities have been exhausted, and so there are no triangles in $G$.
	\end{proof}
	
	\medskip
	\noindent\textbf{Claim 2.} Every independent copy of $\omega\cdot 2$ contained in $G$, which is closed in its supremum, is cofinal in $\delta$.
	\begin{proof}
	Let $W:= A\cup\set{\tau}\cup B\subseteq \delta$ be an independent set with $A<\tau<B$, $\otp{A} = \otp{B} = \omega$ and $\tau=\sup A$. Assume for the purpose of a contradiction that $W\subseteq \theta$ for some $\theta<\delta$. Without loss of generality, let $\theta = \omega^3 + \omega^2\cdot n$ for some natural $n$. For $i,j$  denote $\layerTheta{\theta}{i}{j} = \setarg{\alpha\in\cc{\theta}{i}}{\CBr{\alpha}=j}$.
	
	 By the pigeonhole principle, we may assume that $A\subcof\subtree{=j_A}{\tau}$ for some $j_A<\CBr{\tau}$, and $B\subseteq \layerTheta{\theta}{i_B}{l_B}$ for some $i_B\leq k_\theta$, $l_B<\CBr{\sup\cc{\theta}{i_B}}$. Fix such $j_A,i_B,l_B$. We proceed by eliminating where $\tau$ can potentially reside.
	
	As $\tau$ is a limit ordinal, $\CBr{\tau} > 0$. Since $\subfan{\tau}\times \set{\tau}\subseteq E$, it also must be that $\CBr{\tau} > 1$ and $j_A <\CBr{\tau}-1$.
	
	Assume $\tau\in\layerTheta{\theta}{1}{3}$, that is, $\tau = \omega^3$. Then clearly $i_B>1$, $l_B = 1$. For every $\alpha\in\layerTheta{\theta}{1}{0}$ the set $\layerTheta{\theta}{i_B}{1}\setminus\neighbours{\alpha}$ is finite, so $j_A\neq 0$. Also, $\layerTheta{\theta}{1}{1}\harass \layerTheta{\theta}{i_B}{1}$, and so $j_A\neq 1$. This contradicts $j_A<\CBr{t}-1 = 2$, and so $\tau\notin\layerTheta{\theta}{1}{3}$. Thus, it must be that $\CBr{\tau} = 2$ and $j_A = 0$.
	
	Assume $\tau\in\layer{1}{2}$. Note the following:
	\begin{itemize}
	\item 
	$\set{\alpha>\tau}\cap\layerTheta{\theta}{1}{0}\subseteq \neighbours{\tau}$;
	\item
	$\set{\alpha>\tau}\cap \layerTheta{\theta}{1}{1}\subseteq\neighbours{A}$;
	\item
	$\layerTheta{\theta}{1}{2}\setminus \neighbours{\alpha}$ is finite for every $\alpha\in A$.
	\end{itemize}
	The three listed items guarantee that $i_B>1$. However, for each $i>1$ note that
	\begin{itemize}
	\item
	$\layerTheta{\theta}{i}{0}\subseteq \neighbours{A}$;
	\item
	$\layerTheta{\theta}{i}{1}\setminus\neighbours{\alpha}$ is finite for every $\alpha\in A$.
	\end{itemize}
	So it cannot be that $l_B\in\set{0,1}$, a contradiction.
	
	So the only option is that $\tau\in\layerTheta{\theta}{i}{2}$ for some $1<i\leq k_\theta$. We now observe that for each $i'>i$
	\begin{itemize}
	\item
	$\layerTheta{\theta}{i'}{0}\subseteq \neighbours{\tau}$;
	\item
	$\layerTheta{\theta}{i'}{1}\subseteq \neighbours{A}$.
	\end{itemize}
	prohibiting $l_B = 0$ or $l_B=1$, and therefore the existence of $l_B$. We conclude that there is no $\theta$ as we had assumed.
	\end{proof}
	
	By the claims, taking the induced colouring on any $\theta < \omega^3\cdot 2$ demonstrates that $\theta\not\to_{cl}(\omega\cdot 2, 3)^{2}$.
\end{proof}

\begin{corollary}
$\closedramseynumber{\omega\cdot 2}{3} = \omega^3\cdot 2$ \qed
\end{corollary}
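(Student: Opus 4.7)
The plan is straightforward: this corollary is an immediate consequence of combining the two preceding propositions. Proposition~\ref{upperBound} furnishes the upper bound $\closedramseynumber{\omega\cdot 2}{3} \leq \omega^3\cdot 2$, while Proposition~\ref{lowerBound} furnishes the matching lower bound $\closedramseynumber{\omega\cdot 2}{3} \geq \omega^3\cdot 2$. Since both inequalities hold on the same ordinal, the two sides are equal and the proof concludes with no further combinatorial work. This is precisely why the statement is recorded as a corollary and closed with a tombstone directly in its statement rather than being accompanied by a \texttt{proof} environment.

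It is worth pausing to note where the actual effort resides, since the two-line proof hides substantial machinery. The upper bound proposition invokes the canonization apparatus developed earlier: given any two-colouring of $\pairs{\omega^3\cdot 2}$ without a monochromatic triangle in colour $1$, Proposition~\ref{canonicalSkeleton} allows passage to a canonical subcolouring on a skeleton, after which Lemma~\ref{edgeColourScarce} constrains the few places where edges can appear, and the $\oppress$/$\harass$ analysis forces an independent closed copy of $\omega\cdot 2$ straddling the point $\omega^3$. The lower bound proposition, by contrast, is a direct construction: one writes down an explicit canonical triangle-free graph on $\omega^3\cdot 2$ and verifies by a careful case analysis that every independent closed copy of $\omega\cdot 2$ in it is cofinal, so no proper initial segment can witness the partition relation. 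The corollary itself simply reads off equality from these two sandwiching inequalities.
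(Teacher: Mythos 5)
Your proposal is correct and matches the paper exactly: the corollary is the immediate conjunction of Proposition~\ref{upperBound} and Proposition~\ref{lowerBound}, which is why the paper closes the statement with a tombstone and supplies no proof environment. Your surrounding commentary on where the work actually lives is accurate but not required for the corollary itself.
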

\appendix
\begin{landscape}
\section{$\omega^3\cdot 2$ under the $\treeorder$ relation}
\label{figure}
\begin{figure}[b]
\begin{tikzpicture}[x=8,y=50]
\foreach \i in{0}
{
	\node[fill=black,circle, inner sep=3pt] at (\i,4) {};
	\foreach \j in{-12,0,12}
	{
		\node[fill=black,circle, inner sep=2.5pt] at (\i+\j,2.5) {};
		\draw (\i,4) -- (\i+\j,2.5);
		\foreach \k in{-4,0,4}
			{
			\node[fill=black,circle, inner sep=1.5pt] at (\i+\j +\k,1) {};
			\draw (\i+\j,2.5) -- (\i+\j +\k,1);
			\foreach \l in{-1,0,1}
				{
				\node[fill=black,circle, inner sep=1pt] at (\i+\j+\k+\l,0) {};
				\draw (\i+\j+\k,1) -- (\i+\j +\k+\l,0);
				}
			\node[label=right:\tiny $...$] at (\i+\j+\k+1/2,0) {};
			}
		\node[label=right:$\dots$] at (\i+\j+3.5,1) {};
	}
	\node[label=right:\Huge$\dots$] at (\i+12,2.5) {};
}

\foreach \i in{38}
{
	\node[fill=gray,circle, inner sep=3pt] at (\i,4) {};
	\foreach \j in{-12,0,12}
	{
		\draw[help lines,dashed] (\i,4) -- (\i+\j,2.5);
		\node[fill=black,circle, inner sep=2.5pt] at (\i+\j,2.5) {};
		\foreach \k in{-4,0,4}
			{
			\node[fill=black,circle, inner sep=1.5pt] at (\i+\j +\k,1) {};
			\draw (\i+\j,2.5) -- (\i+\j +\k,1);
			\foreach \l in{-1,0,1}
				{
				\node[fill=black,circle, inner sep=1pt] at (\i+\j+\k+\l,0) {};
				\draw (\i+\j+\k,1) -- (\i+\j +\k+\l,0);
				}
				\node[label=right:\tiny $...$] at (\i+\j+\k+1/2,0) {};
			}
		\node[label=right:$\dots$] at (\i+\j+3.5,1) {};
	}
	\node[label=right:\Huge$\dots$] at (\i+12,2.5) {};

}
\node[label=above:\huge$\omega^3$] at (0,4) {};
\node[label=left:\LARGE$\omega^2$] at (-12,2.5) {};
\node[label=left:\LARGE$\omega^2\cdot 2$] at (0,2.5) {};
\node[label=left:\LARGE$\omega^2\cdot 3$] at (12,2.5) {};
\node[label=north west:\normalsize$\omega$] at (-12-4+0.5,0.9) {};
\node[label=north west:\normalsize$\omega\cdot 2$] at (-12+0.5,0.9) {};
\node[label=north west:\normalsize$\omega\cdot 3$] at (-12+4+0.5,0.9) {};
\node[label=below:\small$0$] at (-12-4-1,0) {};
\node[label=below:\small$1$] at (-12-4,0) {};
\node[label=below:\small$2$] at (-12-4+1,0) {};

\node[label=above:\huge$\color{gray}\omega^3\cdot 2$] at (38,4) {};
\node[label=above:\LARGE$\omega^3 + \omega^2$] at (38-12,2.5) {};
\node[label=above:\LARGE$\omega^3 + \omega^2\cdot 2$] at (38+0,2.5) {};
\node[label=above:\LARGE$\omega^3 + \omega^2\cdot 3$] at (38+12,2.5) {};
\node[label=north west:\normalsize$\omega^3+\omega$] at (38-12-2,0.9) {};
\node[label=below:\tiny$\omega^3 + 1$] at (38-12-4-1.5,0.1) {};
\end{tikzpicture}
\end{figure}
\end{landscape}

\begin{landscape}
\section{Visualization of Lemma \ref{lowerBound}}
\label{colouringFigure}
\begin{figure}[b]
\scalebox{0.95}{
\begin{tikzpicture}[x=8,y=50]
\foreach \i in{0}
{
	\node[fill=black,circle, inner sep=3pt] at (\i,4) {};
	\foreach \j in{-12,0,12}
	{
		\node[fill=black,circle, inner sep=2.5pt] at (\i+\j,2.5) {};
		\draw[very thin] (\i,4) -- (\i+\j,2.5);
		\foreach \k in{-4,0,4}
			{
			\node[fill=black,circle, inner sep=1.5pt] at (\i+\j +\k,1) {};
			\draw[very thin] (\i+\j,2.5) -- (\i+\j +\k,1);
			\foreach \l in{-1,0,1}
				{
				\node[fill=black,circle, inner sep=1pt] at (\i+\j+\k+\l,0) {};
				\draw[very thin] (\i+\j+\k,1) -- (\i+\j +\k+\l,0);
				}
			}
	}
}

\foreach \i in{38}
{
	\node[fill=gray,circle, inner sep=3pt] at (\i,4) {};
	\foreach \j in{-12,0,12}
	{
		\draw[help lines,dashed] (\i,4) -- (\i+\j,2.5);
		\node[fill=black,circle, inner sep=2.5pt] at (\i+\j,2.5) {};
		\foreach \k in{-4,0,4}
			{
			\node[fill=black,circle, inner sep=1.5pt] at (\i+\j +\k,1) {};
			\draw (\i+\j,2.5) -- (\i+\j +\k,1);
			\foreach \l in{-1,0,1}
				{
				\node[fill=black,circle, inner sep=1pt] at (\i+\j+\k+\l,0) {};
				\draw (\i+\j+\k,1) -- (\i+\j +\k+\l,0);
				}
			}
	}

}
\draw[very thick,rounded corners] (0,4) .. controls (45,6) and (65,3) ..
   (55.5,0)
   (55.5,0.1) rectangle (20.5,-0.1);
\draw[very thick,rounded corners] (-12,2.5) .. controls (-7.5,1.6) and (-6.5,1.25) ..
	(-5,0.1)
	(-5.5,-0.1) rectangle (17.5,0.1);
\draw[very thick,rounded corners] (-7,0) .. controls (-6,1.5) and (-2,2) ..
	(-0.5,2.4)
	(-1,2.4) rectangle (13,2.6);
\draw[very thick,rounded corners] (17,0) --
	(21.5,1)
	(21.5,0.9) rectangle (54.5,1.1);
\draw[very thick,rounded corners] (26,2.5) parabola
	(33,0.2)
	(32.5,0.2) rectangle (56,-0.2);
\draw[color=green!40!black,very thick,densely dotted,rounded corners] (-18,0.7) rectangle (18,1.3)
	(18,1) -- (21,1)
	(21,0.8) rectangle (55,1.2);
\draw[color=red!60!black,very thick,densely dashed,rounded corners] (56.5,0.25) rectangle (20.25,-0.25)
	.. controls (-5,-0.5) ..
	(-6.25, -0.1)
	(-17.75,-0.1) rectangle (-6.25,0.1) --
	(-2,0.9)
	(-4.5,0.9) rectangle (16.5,1.1);
\draw[color=blue!60!black,ultra thick,dashed] (19.75,-0.35) rectangle (31.5,0.35)
	(30,0.35) -- (33,0.7)
	(32,0.7) rectangle (55.5,1.3);
	
\end{tikzpicture}
}
\end{figure}
\end{landscape}

\subsection*{Acknowledgements}
This research stemmed from conversations of the author with Jacob Hilton at the 8th "Young Set Theory" workshop in Jerusalem, October 2015. I thank Hilton for introducing me to the subject matter of this paper, and choosing the right riddles to get me hooked. I also thank the workshop's organizers and the Israeli Institute of Advanced Studies (IIAS) for providing such a stimulating environment.

I would also like to thank Bill Chen, Jacob Hilton, Menachem Kojman, Nadav Meir and Thilo Weinert for providing valuable feedback on early versions of this paper.

The research was partially supported by ISF grant No. 181/16 and 1365/14.

\bibliographystyle{alpha}
\bibliography{myrefs}
\end{document}